\newbox\gnBoxA
\newdimen\gnCornerHgt
\newdimen\gnArgHgt
\def\gcode #1{%
\setbox\gnBoxA=\hbox{$#1$}%
\gnArgHgt=\ht\gnBoxA%
\ifnum     \gnArgHgt<\gnCornerHgt \gnArgHgt=0pt%
\else \advance \gnArgHgt by -\gnCornerHgt%
\fi \raise\gnArgHgt\hbox{$\ulcorner$} \box\gnBoxA %
\raise\gnArgHgt\hbox{$\urcorner$}}
\newtheorem{thm}{Theorem}[section]
\theoremstyle{definition}
\newtheorem{dfn}[thm]{Definition}
\theoremstyle{plain}
\newtheorem{lem}[thm]{Lemma}
\theoremstyle{plain}
\newtheorem{fact}[thm]{Fact}
\theoremstyle{plain}
\newtheorem{cor}[thm]{Corollary}
\newcommand{\Tleq}{\leq_{\mathbf{T}}}
\newcommand{\Nat}[0]{\mathbb{N}}
\newcommand{\PowN}[0]{\mathcal{P}(\mathbb{N})}
\newcommand{\lt}[0]{\mathrm{L}_2}
\newcommand{\z}[0]{\mathsf{Z}}
\newcommand{\rca}[0]{\mathsf{RCA}}
\newcommand{\wkl}[0]{\mathsf{WKL}}
\newcommand{\aca}[0]{\mathsf{ACA}}
\newcommand{\atr}[0]{\mathsf{ATR}}
\newcommand{\pica}[0]{\Pi^1_1\text{-}\mathsf{CA}}
\newcommand{\pra}[0]{\mathrm{PRA}}
\newcommand{\isig}[1]{\mathrm{I}\Sigma_{#1}}
\newcommand{\zfc}[0]{\mathrm{ZFC}}
\newcommand{\pa}[0]{\mathrm{PA}}
\newcommand{\arith}[0]{\mathrm{ARITH}}
\newcommand{\hyp}[0]{\mathrm{HYP}}
\DeclareMathOperator{\Con}{Con}
\title{Computational reverse mathematics \\ and foundational analysis}
\author{Benedict Eastaugh\footnote{%
Munich Center for Mathematical Philosophy,
LMU Munich,
Geschwister-Scholl-Platz 1,
80539 Munich,
Germany.
Email: \href{mailto:benedict@eastaugh.net}{benedict@eastaugh.net}
}}
\date{June 28, 2018}
\begin{document}

\maketitle

\begin{abstract}
    Reverse mathematics studies which subsystems of second order arithmetic are
    equivalent to key theorems of ordinary, non-set-theoretic mathematics. The
    main philosophical application of reverse mathematics proposed thus far is
    \emph{foundational analysis}, which explores the limits of different
    foundations for mathematics in a formally precise manner. This paper gives a
    detailed account of the motivations and methodology of foundational
    analysis, which have heretofore been largely left implicit in the practice.
    It then shows how this account can be fruitfully applied in the evaluation
    of major foundational approaches by a careful examination of two case
    studies: a partial realization of Hilbert's program due to
    \citet{Simpson1988}, and predicativism in the extended form due to Feferman
    and Schütte.

    \citet{Shore2010, Shore2013} proposes that equivalences in reverse
    mathematics be proved in the same way as inequivalences, namely by
    considering only $\omega$-models of the systems in question. Shore refers to
    this approach as \emph{computational reverse mathematics}. This paper shows
    that despite some attractive features, computational reverse mathematics is
    inappropriate for foundational analysis, for two major reasons. Firstly, the
    computable entailment relation employed in computational reverse mathematics
    does not preserve justification for the foundational programs above.
    Secondly, computable entailment is a $\Pi^1_1$ complete relation, and hence
    employing it commits one to theoretical resources which outstrip those
    available within any foundational approach that is proof-theoretically
    weaker than $\pica_0$.
\end{abstract}



\section{Introduction}
\label{sec:introduction}

In ordinary mathematical practice, mathematicians prove theorems, reasoning from
a fixed set of axioms to a logically derivable conclusion. The axioms in play
are usually implicit: mathematicians rarely assert at the beginning of their
papers that they work in, for example, $\pa$ or $\zfc$. Given a particular proof
we might ask which axioms were employed and thus make explicit the author's
assumptions. Now that we have a set of axioms $\Gamma$ which are sufficient to
prove some theorem $\varphi$, we could further inquire whether they are
necessary to prove the theorem, or whether a strictly weaker set of axioms would
suffice. To a first approximation, \emph{reverse mathematics} is the program of
discovering precisely which axioms are both necessary and sufficient to prove
any given theorem of ordinary mathematics.

Reverse mathematics was initiated by Harvey Friedman \citeyearpar{Friedman1975,
Friedman1976}, and extensively developed in the work of Stephen Simpson and his
students. It determines the proof-theor\-etic strength of theorems of ordinary
mathematics, by proving equivalences between formalised versions of those
theorems and axiom systems in a hierarchy of known strength. Roughly speaking,
the term ``ordinary mathematics'' means non-set-theor\-etic mathematics, i.e.
those parts of mathematics which do not depend on abstract set-theor\-etical
concepts. Typical examples of ordinary mathematics include real and complex
analysis, countable algebra, and the topology of complete separable metric
spaces.

The axiom systems used in reverse mathematics are subsystems of second order
arithmetic or $\z_2$. This is an extension of familiar first order systems of
arithmetic such as Peano arithmetic. In the intended interpretation, variables
in first order arithmetic range over the natural numbers $\mathbb{N}$. Second
order arithmetic also has number variables ranging over the natural numbers, but
in addition to these it has set variables which range over sets of numbers
$X \subseteq \mathbb{N}$. For the full technical background on second order
arithmetic the reader should consult \citet{Simpson2009}, the primary reference
work on reverse mathematics. Here we restrict ourselves to sketching the basic
features of the framework and explaining some salient details.

The language of second order arithmetic, $\lt$, is a two-sorted first order
language with the following nonlogical symbols: constant symbols $0$ and $1$,
binary function symbols $+$ and $\cdot$, and the binary relation symbols $<$ and
$\in$.
$\lt$-structures have two domains: a first order domain $|M|$ over which the
number variables $x_0, x_1, \dotsc$  range, and a second order domain
$\mathcal{S} \subseteq \mathcal{P}(|M|)$ over which the set variables $X_0, X_1,
\dotsc$ range. An $\lt$-structure $M$ is thus a tuple of the form
\begin{equation}
    \label{eqn:l2_structure}
    M = \langle
        |M|, \mathcal{S}_M,
        +_M, \cdot_M,
        0_M, 1_M,
        <_M
    \rangle,
\end{equation}
where $0_M$ and $1_M$ are elements of $|M|$, $+_M$ and $\cdot_M$ are functions
from $|M| \times |M|$ to $|M|$, and $<_M$ is a binary relation on $|M|$.

The formal system $\z_2$ of second order arithmetic has a long history in work
on foundations of mathematics, which we can trace back to Dedekind. The most
substantive classical developments are those of \citet{HilBer1968}. The axioms
of $\z_2$ fall into three groups: the basic axioms; a comprehension scheme; and
an induction axiom. The basic axioms are those of Peano arithmetic, minus the
induction scheme. To these is added the comprehension scheme
\begin{equation*}
    \label{eqn:z2_comp_scheme}
    \exists{X}\forall{n} ( n \in X \leftrightarrow \varphi(n) )
    \tag{$\mathsf{CA}$}
\end{equation*}
for all $\lt$-formulae $\varphi$ (with parameters). Many subsystems of second
order arithmetic are obtained by restricting this comprehension scheme to
particular syntactically-defined subclasses. Finally there is the induction
axiom
\begin{equation*}
    \label{eqn:z2_ind_axiom}
    \forall{X} (
        (0 \in X \wedge \forall{n} (n \in X \rightarrow n + 1 \in X))
        \rightarrow
        \forall{n} (n \in X)
    ).
    \tag{$\mathsf{I}_0$}
\end{equation*}
This is a single axiom rather than an axiom scheme. As such its strength is tied
directly to the associated comprehension scheme: we have induction only for
those sets which we can prove to exist by comprehension. Because $\z_2$ includes
the comprehension scheme for all $\lt$-formulae $\varphi$, every instance of the
second order induction scheme
\begin{equation*}
    \label{eqn:z2_ind_scheme}
    (\varphi(0) \wedge \forall{n} (\varphi(n) \rightarrow \varphi(n + 1)))
    \rightarrow
    \forall{n} \; \varphi(n)
    \tag{$\mathsf{I}$}
\end{equation*}
is a theorem of $\z_2$. By restricting $\varphi$ to formulae in the language of
first-order arithmetic $\mathrm{L}_1$ we obtain the induction scheme of
first-order Peano arithmetic ($\pa$). A stronger restriction, limiting $\varphi$
to $\Sigma^0_1$ formulae, gives us the $\Sigma^0_1$ induction scheme.

A subsystem $T$ of $\z_2$ is a formal system in the language $\mathrm{L}_2$ such
that each axiom $\varphi$ of $T$ is a theorem of $\z_2$. The central subsystems
are colloquially known as the \emph{Big Five}. Each of them includes the basic
axioms, the induction axiom and some other set existence axioms.
The weakest is $\rca_0$, the usual base theory for reverse mathematics. Its
axioms consist of the basic axioms, $\Delta^0_1$ (that is, recursive)
comprehension and $\Sigma^0_1$ induction. Induction in $\rca_0$ thus outstrips
comprehension, although it is still quite limited compared to $\pa$ or $\z_2$.
The other members of the Big Five are:
$\wkl_0$, obtained by adding to $\rca_0$ the assertion that every infinite
subtree of $2^{<\mathbb{N}}$ has an infinite path through it;
$\aca_0$, which is given by the comprehension scheme for all arithmetically
definable sets;
$\atr_0$, which extends $\aca_0$ with an axiom allowing the iteration of the
arithmetical operations along any wellordering;
and finally $\pica_0$, whose defining axiom is the $\Pi^1_1$ comprehension
scheme.

At first glance, second order arithmetic may seem somewhat limited, and
unsuitable for the development of large portions of mathematics, even when we
restrict our attention to ordinary, non-set-theoretic mathematics.
Formally, second order arithmetic includes only two kinds of entities: natural
numbers and sets of natural numbers. While clever coding schemes allow objects
from many branches of mathematics to be represented within this formally austere
framework, limitations abound, generally to do with cardinality: one cannot
quantify over uncountable sets of real numbers, prove theorems about topological
spaces of arbitrary cardinality, and so on. Most obviously, the set $\mathbb{R}$
of all real numbers cannot be directly represented.

Mathematics within second order arithmetic is thus limited to countable or
countably representable structures such as complete separable metric spaces and
countable abelian groups. Nevertheless, this turns out to include a wide variety
of mathematical objects including real numbers, continuous functions on the real
line and complex plane, and Borel and analytic sets. Constructing
representations of these objects and proving that they are well-behaved usually
requires a certain minimum of theoretical strength. Definitions in reverse
mathematics are therefore often given relative to a particular system, usually
the weak base system $\rca_0$.

While we can do enough in $\rca_0$ to get mathematics off the ground, many key
theorems require stronger axioms. A typical example is that $\rca_0$ does not
prove the Bolzano--Weierstraß theorem, a fundamental theorem in analysis which
states that every bounded sequence of real numbers has a convergent subsequence.
The Bolzano--Weierstraß theorem can be formalised as a sentence $\mathrm{BW}$
in the language of second order arithmetic. In two papers which inaugurated the
study of reverse mathematics, \citet{Friedman1975, Friedman1976} showed that
$\mathrm{BW}$ is equivalent over $\rca_0$ to the arithmetical comprehension
scheme---the defining axiom of the system $\aca_0$.
To prove the equivalence between $\mathrm{BW}$ and $\aca_0$, one first shows
that $\aca_0$ implies BW, by formalising the usual proof of the theorem within
that system. The reversal is then accomplished by adding $\mathrm{BW}$ to the
axioms of $\rca_0$ and showing that any instance of arithmetical comprehension
is provable from $\rca_0 + \mathrm{BW}$.

One way to think about the epistemic value of reverse mathematics is that it
uncovers the resources required in ordinary mathematical reasoning: for example,
if a proof uses a compactness argument, then weak König's lemma must be amongst
the stock of axioms which the mathematician draws upon, whether explicitly or
implicitly. That nonconstructive methods, in the form of compactness, are
required to prove the completeness theorem for first-order logic tells us
something important about that theorem and its epistemic standing.
This has many ramifications for philosophical issues. \citet{Feferman1992}
points out the application of reverse mathematical methods to indispensability
arguments. Such arguments are intended to show that certain mathematical
entities, being indispensable to science, must be accorded the same ontological
rights as those whose existence is empirically confirmed. This leaves two
critical questions unanswered: which mathematical entities does this argument
show us to be committed to the existence of, and what principles concerning
those entities must we endorse in order to carry out the mathematics that is
indispensable to science?
Real analysis is a natural starting point, since our current best physical
theories model spacetime in terms of a geometrical continuum, as a type of
differentiable manifold.
By formalising our best physical theories in second order arithmetic, we could
obtain far sharper answers, by showing that theorems of analysis required for
physics are equivalent to particular systems studied in reverse mathematics.

This paper, however, is not concerned with applications of reverse mathematics
to the indispensability argument. Instead, it addresses the relationship of
reverse mathematics to the foundational views espoused by Hilbert and Weyl, and
the finitist and predicativist programs developed in their work and that of
their successors.
More generally, it explores and defends the usefulness of reverse mathematics
for determining the limits of what can be proved within foundational schemes
that can be formalised in the setting of second order arithmetic, and for
demonstrating to adherents of particular foundational approaches that they are
unable to recover a given part of mathematics within their chosen foundation.
It then turns to the task of determining whether one component of standard
reverse mathematical practice, namely a proof-theoretically weak base theory
over which equivalences between mathematical theorems and subsystems of second
order arithmetic are proved, is essential.

§\ref{sec:foundational_analysis} gives a detailed account of the motivations
and methodology that underpin the reverse mathematical analysis of foundations.
§\ref{sec:partial_hilbert} and §\ref{sec:predicativism} are then devoted to
case studies of particular foundational programs: a partial realization of
Hilbert's program due to \citet{Simpson1988}, and predicativism as initially
developed by Weyl and then extended through the work of Kreisel, Feferman,
Schütte, and others.
The paper then examines a proposal of \citet{Shore2010, Shore2013} to abandon
the standard practice of proving reverse mathematical equivalences over the base
theory $\rca_0$, and instead concern ourselves only with whether the principles
involved are true in the same Turing ideals, that is to say, $\lt$-structures
whose first order parts consist of the standard natural numbers $\omega$ and
whose second order parts are classes of sets $\mathcal{C} \subseteq
\mathcal{P}(\omega)$ closed under Turing reducibility and recursive joins. In
§\ref{sec:shore_intro} we introduce and motivate Shore's proposal, and then in
§\ref{sec:justification} we argue that in failing to respect the justificatory
structure of the foundational programs mentioned above, Shore's equivalence
relation shows itself to be inappropriate for analysing foundations for
mathematics in the way described above. Finally, in §\ref{sec:complexity} we
show that this equivalence relation is highly complex, and thus brings with it
attendant theoretical commitments that exceed those acceptable to proponents of
the type of foundational programs being analysed.

\section{Reverse mathematical analysis of foundations}
\label{sec:foundational_analysis}

One of the main philosophical roles attributed to reverse mathematics in the
current literature is what we shall call \emph{foundational analysis}. This
application has been strongly promoted by Stephen Simpson, born out of his view
(stated amongst other places in \citet{Simpson2009} and \citet{Simpson2010})
that there is a correspondence between subsystems of second order arithmetic and
foundational programs such as Weyl's predicativism and Hilbert's finitistic
reductionism.
By providing a hierarchy of comparable systems, and proving the equivalence of
theorems of ordinary mathematics to these systems, reverse mathematics
demonstrates what resources a particular theorem requires, and what theorems a
given system cannot prove. In other words, when committing to a foundational
system, reverse mathematics lets us know precisely what we are giving up.
Crucially, it also tells us when a proponent of such a system employs
mathematical resources that she is not entitled to, as they go beyond what her
preferred foundation can prove.

The following example should clarify the notion of foundational analysis.
Suppose Sarah is a predicativist in the tradition of \citet{Weyl1918}. She
believes that the natural numbers form a completed, infinite totality, and that
sets which can be defined arithmetically---i.e. with quantifiers ranging over
the natural numbers, but not over sets of them---also exist. This would lead her
to accept the arithmetical comprehension scheme, and thus the subsystem of
second order arithmetic $\aca_0$. She might even accept a somewhat stronger
system; this possibility is explored in §\ref{sec:predicativism}. But given
Sarah's predicativist outlook she would resist the thoroughly impredicative
axiom scheme of $\Pi^1_1$ comprehension, and its associated subsystem of second
order arithmetic $\pica_0$.

Now suppose that her colleague Rebecca disagrees with Sarah's predicativism and
wants to persuade her that it is an inappropriate foundation for mathematics.
She might argue as follows: Since Sarah wants her predicativist outlook to
provide a foundation for all of mathematics, it would be strange if she failed
to account for important theorems of ordinary mathematics---say, in abelian
group theory.
Consider the statement
\begin{itemize}
    \item[($\star$)]
        Every countable abelian group can be expressed as a direct sum of a divisible
        group and a reduced group.
\end{itemize}
The group theorist in the street, Rebecca argues, believes this to be true.
Sarah might tentatively agree, whereupon Rebecca would point out the following
theorem from reverse mathematics: assuming that ($\star$) holds, one can prove
(in $\rca_0$, which Sarah clearly accepts) the $\Pi^1_1$ comprehension scheme
\citep*[theorem 6.3, p.~178]{FriSimSmi1983}.

It appears that Sarah has some explaining to do. Either she must abandon her
predicativism, or she must push back against the naturalistic line Rebecca is
urging upon her. Neither course appears terribly palatable, while the fact that
this theorem is drawn not from set theory or some other area of mathematics
whose ontological commitments might be thought extravagant could be taken as
evidence that the problem here is a pressing one. The contentious statement is
an ordinary theorem from a core area of mathematics, which reverse mathematical
analysis shows us to have substantial proof-theoretic strength.

The broadly naturalistic argument that Rebecca makes to Sarah can be generalised
in a straightforward way. Let $\mathcal{F}$ be a foundation for mathematics
which accepts classical logic as leading to correct conclusions, and let
$S_\mathcal{F}$ be a subsystem of second order arithmetic containing $\rca_0$
such that the $\mathcal{F}$-theorist accepts that $S_\mathcal{F}$ is a faithful
formalisation of the principles of $\mathcal{F}$. Then any participant in the
foundational dialectic may (as Rebecca does in the example above) fill in the
following schematic argument with her favourite examples, and make it to the
$\mathcal{F}$-theorist:
\begin{quote}
    Consider the ordinary mathematical theorem $P$, which may be faithfully
    formalised in the language of second order arithmetic as the sentence
    $\varphi$. $\varphi$ is equivalent over $\rca_0$ to the subsystem of second
    order arithmetic $S(\varphi)$. But $S_\mathcal{F}$ cannot prove the axioms
    of $S(\varphi)$, and thus cannot prove $\varphi$. So $\mathcal{F}$ cannot
    recover the ordinary mathematical theorem $P$, and is thus inadequate as a
    foundation for mathematics.
\end{quote}

It is not necessary to suppose that all instances of this argument scheme will
be persuasive to all $\mathcal{F}$-theorists: foundational analysis does not
provide a knockdown argument against predicativism, or indeed any foundational
view with limited mathematical resources. Rather, it makes arguments like the
dispute between Rebecca and Sarah precise: we can see, within a common framework
(namely the base theory $\rca_0$, and the coding required to represent ordinary
mathematical concepts in it), just where the boundaries of these foundational
systems lie. As a rational agent, Sarah surely formed her foundational views in
the full understanding that they require her to give up on any mathematics that
her view deems to be unfounded. The decision to give up on or stick with her
foundation is not one to be taken lightly, and it is one that should be made by
considering the relevant facts. These facts can, in large part, be supplied by
foundational analysis, which allows Sarah and the rest of us to see precisely
what is at stake.

For foundational analysis to play a useful philosophical role in mediating
between disputants with different foundational stances, it must be possible to
carry out this analysis on ground which is common between the disputants.%
\footnote{%
For a closely related discussion, albeit one which treats much stronger logics
and axiomatic principles than those which are the subject of this article, see
\citet{Koellner2010}.
}
So while a predicativist like Sarah and a platonist like Rebecca might disagree
about whether $\Pi^1_1$ comprehension is a valid axiom, they both accept
the laws of classical logic and at least the axioms of $\rca_0$, as well as the
faithfulness of the representation of the theorem ($\star$) in second order
arithmetic, and thus both will agree that ($\star$) above is not predicatively
provable. In other words, foundational analysis makes it clear where the fault
lines lie, and the existence of common ground makes the conclusion available not
just to those who accept stronger axioms or rules of inference, but those who
are committed to a more limited foundational framework and will only accept
mathematical conclusions derived within that framework.

Notice that Sarah already accepted that $\Pi^1_1$ comprehension was not a
predicative principle, otherwise she would not have been able to deduce that
theorem ($\star$) about abelian groups was not predicatively provable. In
accepting this Sarah goes beyond what her foundation can formally prove. If she
accepts $\aca_0$ and no more, then it is difficult to see how she can separate
$\pica_0$ from $\aca_0$. $\pica_0$ implies all instances of arithmetical
comprehension, so $\aca_0$ is a subsystem of $\pica_0$, but in order to show
that it is a proper subsystem, one typically construct a model of $\aca_0$ that
is not a model of $\pica_0$. In doing so, however, one thereby proves the
consistency of $\aca_0$, which is (assuming that $\aca_0$ really is consistent)
not something that $\aca_0$ can prove. Any proof that $\pica_0$ properly extends
$\aca_0$ therefore relies on theoretical resources not available within $\aca_0$
itself. Needless to say, we cannot eliminate the assumption of the consistency
of $\aca_0$, since if $\aca_0$ is inconsistent, then it proves everything that
$\pica_0$ does, which is to say every sentence in the language of second order
arithmetic.

The upshot of this is that Sarah cannot prove that $\Pi^1_1$ comprehension is
not a predicative principle merely on the basis of her acceptance of any fixed
predicative formal theory, no matter how strong it is, since we can re-run the
above argument for any system $S$ such that $\aca_0 \subseteq S \subsetneq
\pica_0$. For Sarah or any predicativist, the judgement of the impredicativity
of $\Pi^1_1$ comprehension must therefore be justified by some other means. One
candidate justification might be Sarah's acceptance of the soundness of the
predicative formal theory $\aca_0$, or a predicative extension thereof.
Alternatively, the impredicativity of $\pica_0$ might itself be taken as a basic
(albeit defeasible) belief. The $\Pi^1_1$ comprehension scheme quantifies over
all sets of natural numbers, and appears to do so in an essential way. In the
absence of evidence to the contrary, Sarah should assume that $\pica_0$ is an
impredicative axiom system, and thus unacceptable on the basis of her
predicativist stance.
A third option is quietism about the impredicativity of $\pica_0$: Sarah could
suspend judgement about whether or not $\Pi^1_1$ comprehension is justifiable on
a predicative basis. Sarah's predicativity would thus be an entirely positive
view, as Sarah would accept any statement that can be shown to be predicatively
provable (in the Feferman--Schütte sense), but not deny that a statement is
predicative (save those which are predicatively refutable). While this is a
coherent position, and one which can be applied quite generally to many
foundational views, at least prima facie it fails to do justice to the
predicativist outlook. Predicativism was historically motivated by the apparent
vicious circularity of impredicative definitions, and the positive epistemic
view that those objects exist which can be defined by quantifying over only
objects already shown to exist is linked to the negative view that objects that
cannot be so defined do not exist. Without such a negative view in the
background, the positive program seems to lose some of its bite: while
predicative mathematics is well and good, there is little to recommend it as a
stopping point when the mathematical fruits of impredicativity are just over the
horizon.

In order to make the kind of naturalistic argument sketched above, it is not
sufficient to formalise a foundational system $\mathcal{F}$ as a subsystem of
second order arithmetic $S_\mathcal{F}$ in a way which is acceptable to the
$\mathcal{F}$-theorist. One must also ensure that the mathematical theorems
whose proof-theoretic strength is appealed to in the argument are formalised in
a faithful way. For example, there are theorems of topology which are provably
equivalent to $\Pi^1_2$ comprehension \citep{MumSim2005}. Since the formal
language of second order arithmetic only allows one to quantify over countable
objects, any representation of uncountable objects must be indirect and relies
on the availability of suitable countable codes for those objects. This
availability, and thus the faithfulness of formalisations of ordinary
mathematical notions in second order arithmetic, must be proved in some suitable
metatheory which can quantify directly over the uncountable objects in question,
and prove the existence of the countable codes of those objects.%
\footnote{%
In some cases the metatheoretic axioms required are strong: the existence of an
uncountable topological space with a countable basis was shown by
\citet{Hunter2008} to imply the axioms of $\z_2$, the full system of second
order arithmetic.
}

If Rebecca wanted to invoke these theorems in her attempt to persuade Sarah that
predicativism is inadequate to mathematical practice, and thus mathematical
truth, then her argument would appear to rely on a suppressed premise, namely
the faithfulness of the representation in second order arithmetic of these
topological spaces.
Sarah could therefore respond that the statements in the language of second
order arithmetic which Rebecca takes to be formalisations of theorems of
topology are not, from her predicative perspective, anything of the sort.
Instead they are simply $\lt$-sentences that are not predicatively provable. For
them to be formalisations of particular theorems of topology requires that they
are faithful translations of those theorems, and the proof of that faithfulness
requires theoretical resources that, being strongly impredicative, she is not
willing to commit to.

Foundational analysis in the form sketched above thus seems to impose a natural
criterion on formalisations, namely that the faithfulness of the codings used
must be provable in a conservative extension $S$ of a theory accepted by
proponents of the foundation being analysed. $S$ would be a formal version of the
metatheory discussed above, with higher type variables ranging over uncountable
sets, allowing the direct formalisation of higher type objects such as
uncountable topological spaces.
This would ensure reverse mathematical results could be read as intended, i.e.
as demonstrating the mathematical resources necessary to prove given theorems of
ordinary mathematics. It would then allow the kind of naturalistic argument
given by Rebecca to be evaluated by proponents of a given foundation, within the
theoretical framework they already accept. In the ideal case, the faithfulness
of the codings involved would be provable in a conservative extension of the
base theory, thus allowing reverse mathematical results to be evaluated by
anyone who accepts the axioms of that base theory.

In the next two sections we will study more closely two historical,
philosophically-motivated foundational programs, and their connections to
reverse mathematics and subsystems of second order arithmetic: finitism in the
sense descending from Hilbert's program in \S\ref{sec:partial_hilbert}, and
predicativism in the spirit of Weyl in \S\ref{sec:predicativism}.
%
Before doing so, it is worth remarking that the role played by foundational
analysis in the historical development of reverse mathematics is somewhat
ambiguous. While reverse mathematics has a broadly foundational aim, namely
determining the axioms necessary to prove theorems of ordinary mathematics, it
is unclear how much research in reverse mathematics itself has been directly
motivated by foundational analysis in the sense discussed here.%
\footnote{
Indeed, much of the current research in reverse mathematics is focused on other
concerns, especially the use of tools from computability theory to explore the
growing constellation of intermediate and incomparable subsystems between
$\rca_0$ and $\aca_0$ known as the Reverse Mathematics Zoo
\citep{Dzhafarov2015}. A summary of current research frontiers can be found in
\citet{Montalban2011}.
}

lines of research in the related field of reductive proof
theory that are more explicitly motivated

In contrast, work in the related field of reductive proof theory is more
explicitly motivated 
by goals related to foundational analysis, namely determining what fragment of
ordinary mathematics can be recovered in subsystems of second order arithmetic
that are proof-theoretically reducible to finitistic or constructive systems.
This line of research is known as the relativized Hilbert program. Its inception
is usually traced back to \citet{Ber1967}, and its goals, methods and results
have been articulated by, amongst others, \citet{Sieg1988} and
\citet{Feferman1988a}.

\section{Finitistic reductionism}
\label{sec:partial_hilbert}

Hilbert's program was to reduce infinitary mathematics to finitary mathematics.
He viewed finitism as a secure foundation for mathematics, free of the paradoxes
which arose from seemingly natural assumptions and normal mathematical reasoning
about infinite collections. This reduction was to be accomplished by giving a
finitary consistency proof for infinitary mathematics, which for present
purposes can be identified with $\zfc$. Hilbert thought that employing
infinitary methods in mathematics, such as assuming the existence of infinite
collections, could be viewed simply as a way to supplement our finitistic
theories with ideal statements, analogous to ideal elements in algebra. Ideal
statements are thus intended to be eliminable, at least in principle: the
purpose of Hilbert's desired consistency proof was to show that we can use
infinitary mathematics to get finitary results, and that those results are
finitistically acceptable.

Gödel's second incompleteness theorem shows that there can be no such 
consistency proof, and thus that Hilbert's program cannot be carried out in its
entirety. Many even consider Gödel's theorems to have shown that Hilbert's
program is entirely bankrupt.%
\footnote{%
For a contrary view, see \citet{Detlefsen1979}.
}
While it certainly seems to block the full realization of the enterprise,
\citet{Simpson1988} argues that the possibility of partial realizations remains.
But since the consistency proof Hilbert sought is out of reach, the latter-day
finitistic reductionist must find other ways to demonstrate that their uses of
ideal statements are in principle eliminable. Instead of trying to prove the
consistency of infinitary systems directly, finitistic reductions of infinitary
systems can be carried out in a relativised way, following the template laid
down by \citet{kreisel1968}. We now sketch how such reductions work.%
\footnote{%
An excellent survey of this topic which also details the foundational picture
behind such relativised versions of Hilbert's program is \citet{Feferman1988a}.
}

Suppose we have two theories $T_1$ (in a language $\mathcal{L}_1$) and $T_2$ (in
$\mathcal{L}_2$), both of which contain primitive recursive arithmetic. Suppose
also that we have a primitive recursive set of formulae
$\Phi \subseteq Fml_{\mathcal{L}_1} \cap Fml_{\mathcal{L}_2}$
containing every closed equation $t_1 = t_2$.
A proof-theoretic reduction of $T_1$ to $T_2$ which conserves $\Phi$ is a
partial recursive function $f$ which, given any proof from the axioms of $T_1$
of a sentence $\varphi \in \Phi$, produces a proof of $\varphi$ from the axioms
of $T_2$. If the existence of $f$ can be proved in $T_2$, it then follows that
$T_2$ proves (a formalisation of) the following conditional statement:
``If $T_2$ is consistent then $T_1$ is consistent.''
For if $T_1$ proves that $0 = 1$, then $f$ will transform any proof of $0 = 1$
in $T_1$ into a proof of $0 = 1$ in $T_2$.

If the existence of a proof-theoretic reduction of this sort can be proved in a
finitary system, then we call it a \emph{finitary reduction}. In order for a
proof-theoretic reduction $f$ from an infinitary system to a finitary one to
provide a partial realization of Hilbert's program, $f$ must be a finitary
reduction. Otherwise the result has a circular character unacceptable within a
reductionist program: it would amount to using ideal methods to show that ideal
methods are acceptable. Similarly, an infinitary proof of a conservativity
theorem is insufficient to demonstrate the reducibility of an infinitary system
to a finitary one.

If Hilbert had succeeded in providing a finitary consistency proof for
infinitary mathematics then there would have been no need to mark out the
boundary between finitary and infinitary methods with any precision, as the
proof would have made use of methods which were clearly finitary in nature.
In order to obtain the conservation results that demonstrate that certain
infinitary systems are finitistically reducible, and thereby partially realize
Hilbert's program, Simpson's route to a partial realization of Hilbert's program
requires that we formalise our conception of a finitary system. The formal
system which Simpson selects is primitive recursive arithmetic ($\pra$),
following the thesis proposed by \citet{Tait1981}. The rest of Simpson's
argument rests squarely on this identification of finitist provability with
provability in $\pra$: he does not offer any new considerations in support of
Tait's thesis, instead simply accepting it and proceeding accordingly.

Fixing $\pra$ as the finitary system to which infinitary systems must be reduced
to, the next question is which infinitary systems are finitistically reducible
to $\pra$. One such system is $\wkl_0$, the system obtained by adding weak
König's lemma
(``Every infinite subtree of $2^{<\mathbb{N}}$ has an infinite path'')
to $\rca_0$.
Friedman [1976, unpublished] used model-theor\-etic techniques to
show that $\wkl_0$ is $\Pi^0_2$ conservative over $\pra$, and thus consistent
relative to $\pra$; the proof can be found in \citet[§IX.3]{Simpson2009}.
Subsequently \citet{Sieg1985} gave a primitive recursive proof transformation
which, given a proof of a $\Pi^0_2$ theorem $\varphi$ in $\wkl_0$, generates a
proof of $\varphi$ in $\pra$. Unlike Friedman's result, this proof-theor\-etic
derivation of the conservativity theorem is itself a piece of finitistic
mathematics: it is provable within a finitary system, thus making the reduction
finitary. As the complexity of consistency statements is $\Pi^0_1$, if $\wkl_0$
proves the consistency of $\pra$ then so does $\pra$ itself. From this Simpson
concludes that $\wkl_0$ is finitistically reducible to $\pra$, and so the
fragment of mathematical reasoning which one can carry out in $\wkl_0$ is
finitistically acceptable, in the following sense. Any $\Pi^0_1$ sentence
provable in $\wkl_0$ is finitistically meaningful, in virtue of its form, but
it is also provable in $\pra$ (by the conservativity theorem), and thus
finitistically provable (by Tait's thesis). Any theorem of $\wkl_0$, such as the
Heine--Borel covering theorem or the Hahn--Banach theorem for separable spaces,
is thus legitimised as a lemma that can be invoked in order to prove a
finitistic theorem.

By referring to the reductionist project he proposes as a partial realization of
Hilbert's program, Simpson opens himself up to the criticism that his
interpretation of Hilbert is a misreading, as alleged by
\citet[p.~874]{Sieg1990}, who suggests that Simpson's project might be better
understood as a partial realization of Kronecker's views on foundations of
mathematics. While questions of historical interpretation are important, our
present purpose is not Hilbert scholarship, but determining whether there is a
defensible core to Simpson's position, and thus whether the reverse mathematics
of $\wkl_0$ make a contribution to foundational analysis. Starting from Tait's
thesis that finitist provability is provability in $\pra$, together with the
Hilbertian contention that only $\Pi^0_1$ sentences are finitistically
meaningful, the finitistically provable conservativity theorem gives us strong
prima facie reason to take Simpson's finitistic reductionism seriously. Since
the foundational analysis of finitistic reductionism can be carried out in a
base theory that is itself finitistically reducible, its results are available
to the finitist, who can thereby see that (for example) the Heine--Borel theorem
is finitistically reducible, but the Bolzano--Weierstraß theorem is not.

With the positive case in hand, we turn to potential criticisms of Simpson's
view. The first is his reliance on Tait's thesis, which has taken fire from many
quarters. Broadly speaking, such complaints fall into two camps: that $\pra$ is
too weak to encompass all of finitistic reasoning, and that it is too strong.
Those in the former camp include \citet{Kreisel1958b}, who concluded that
finitist provability coincides with provability in $\pa$. \citet{Detlefsen1979}
argued that adding instances of a restricted version of the $\omega$-rule is
also finitistically acceptable, although Detlefsen's position has in turn been
criticised, for example by \citet{Ignjatovic1994}. Two proposals that fall into
the latter camp are made by \citet{Ganea2010}. From the broad spread of
conclusions reached it is clear that what finitistic reasoning consists in is,
to say the least, disputed. However, Tait's arguments provide a robust defence
of the thesis that primitive recursive arithmetic demarcates the limits of
finitistic reasoning, and moreover, one that has gained wide acceptance. We
therefore conclude that on the one hand, extant arguments against Tait's thesis
entail that we should not consider Simpson's identification of finitistic
reducibility with proof-theoretic reducibility to $\pra$ to be established; but
on the other, since a strong case can be made in favour of the thesis, Simpson's
finitistic reductionism should be taken seriously as a foundation of
mathematics.

\citet{Bur2010} criticises the finitistic reducibility of $\wkl_0$ from another
direction, arguing that the analysis leading to the identification of finitistic
provability with provability in the formal system $\pra$ cannot be carried out
from a finitistic point of view. This means that the conservativity theorem does
not, by itself, justify the finitist in believing any $\Pi^0_1$ sentence
provable in $\wkl_0$. At best, it provides a recipe for producing $\pra$ proofs
from $\wkl_0$ proofs, which the finitist must then verify by assuring themselves
that each of the axioms of $\pra$ used in the proof is in fact finitistically
acceptable.
Burgess offers the following way out for the finitistic reductionist (p.~139):
by limiting the induction principle in $\rca_0$ and $\wkl_0$ we can define
subtheories $\rca_*$ and $\wkl_*$, in which $\Sigma^0_1$ induction is replaced
by $\Sigma^0_0$ induction plus a sentence asserting that the exponential
function is total.%
\footnote{%
The theories $\rca_*$ and $\wkl_*$ are often referred to in the reverse
mathematics literature as $\rca_0^*$ and $\wkl_0^*$, for example by
\citet{SimSmi1986}, who first isolated these systems.
However, this notation is confusing because in most other cases in reverse
mathematics, the superscript is used to refer to additional set existence axioms
adjoined to the theory, as in the case of $\wkl_0^+$, $\aca_0^+$, and so on,
while the subscript is used to indicate a restricted induction axiom ($\aca$
versus $\aca_0$, for example). We therefore use \citet{Montalban2011}'s
convention and refer to the system defined by the basic axioms, the recursive
comprehension scheme, and induction for $\Sigma^0_0$ formulas plus the totality
of exponentiation, as $\rca_*$, and the system obtained by adding weak König's
lemma to $\rca_*$ as $\wkl_*$.
}
$\wkl_*$ is conservative over a proper subtheory of $\pra$, known as
$\mathrm{I}\Delta_0 + \mathrm{exp}$, and since provability in this system does
not press up against the bounds of finitist provability, the finitist can
recognise that all proofs in this system are finitistically acceptable
\citep{SimSmi1986}.
Many, albeit not all, of the theorems of ordinary mathematics that are provable
in $\wkl_0$ are also provable in $\wkl_*$. For $\Pi^0_1$ sentences provable in
$\wkl_*$ the finitist can therefore work in the infinitary system without
needing to check that the resulting proof in $\pra$ uses only finitistically
acceptable principles, since this is already guaranteed by the conservativity
theorem---which, by a result of \citet{Sie1991}, is finitistically provable.


A further objection is that Simpson's argument does not in any way pick out
$\wkl_0$ as the unique formal counterpart of the program of finitistic
reductionism. \citet{BroSim1993} present a system they call $\wkl_0^+$, which
extends $\wkl_0$ with a strong formal version $\mathrm{BCT}$ of the Baire
Category Theorem. They prove, using a forcing argument, that $\wkl_0^+$ is
$\Pi^1_1$ conservative over $\rca_0$. It follows from a result of
\citet{Parsons1970} that $\wkl_0^+$ is $\Pi^0_2$ conservative over $\pra$, and
that this conservativity theorem can be proved in $\pra$ itself.%
\footnote{%
\citet{Avigad1996} showed how the forcing arguments of \citet{BroSim1993} and
Harrington could be formalised in the base theory $\rca_0$, thus giving a new
effective proof of the $\Pi^1_1$ conservativity of $\wkl_0^+$ over $\rca_0$,
with only a polynomial increase in the length of proofs.
}
So while $\wkl_0$ is, modulo Tait's thesis, a finitistically reducible system,
it is but one partial realization of Hilbert's program. $\wkl_0^+$ is
demonstrably another, and indeed a stronger one, since it satisfies the same
criteria of finitistic reducibility whilst properly extending $\wkl_0$.
One might think that this undermines Simpson's claim that the Big Five
subsystems of second order arithmetic correspond to existing foundational
programs, but this is not a fair reading of Simpson's position: he does not
claim that these systems are the unique formal correlates of these foundational
approaches. It is consistent with his position that there are a variety of
infinitary yet finitistically reducible systems. Nevertheless, it is weak
König's lemma that has been found equivalent to many theorems of ordinary
mathematics, not $\mathrm{BCT}$. This is evidence for the (defeasible) claim
that $\wkl_0$ is a mathematically natural stopping point in a way that
$\wkl_0^+$ is not. $\wkl_0^+$ is finitistically reducible just as $\wkl_0$ is,
while being a proper extension of it, so mathematically natural stopping points
do not appear to always align cleanly with justificatory stopping points---or if
they do, then we have not yet identified the sources of justification of these
axiom systems in a sufficiently fine-grained way.

\citet{PatYok2016} have shown that the statement known as Ramsey's theorem for
pairs and two colours, $\mathsf{RT}^2_2$, is finitistically reducible. Moreover,
by proving an amalgamation theorem, they also show that $\wkl_0 +
\mathsf{RT}^2_2$ is finitistically reducible. Since $\mathsf{RT}^2_2$ is
incomparable with $\wkl_0$ \citep{Jockusch1972, Liu2012}, this is a substantial
extension of the principles of proof known to be finitistically reducible, which
now includes a large class of combinatorial and model-theoretic principles that
have been the focus of much of recent research in reverse mathematics (for a
survey of these principles, see \S4--5 of \citet{Shore2010}; a more comprehensive
introduction to the study of combinatorial principles related to Ramsey's
theorem is \citet{Hirschfeldt2014}).

Having considered the extent to which the reverse mathematics of systems such as
$\wkl_0$ provides a foundational analysis of the program of finitistic
reductionism, and the relationship of this reductive program to Hilbert's, we
now turn to the study of predicativism in the spirit of Weyl and its connections
to subsystems of second order arithmetic such as $\aca_0$ and $\atr_0$.

\section{Predicativism and predicative reductionism}
\label{sec:predicativism}

Predicativism is the view that only those sets that can be defined without
reference to themselves are legitimate, existing objects. Predicativism given
the natural numbers is the view that the natural numbers $\Nat$ form a
completed, infinite totality and thus quantification over the natural numbers is
a legitimate way to define sets. For the rest of this article, whenever
`predicativism' and its cognates are invoked, it is predicativism given the
natural numbers that is meant. Predicativism can be seen as a middle ground
between finitism, in which only finite entities are accorded real existence, and
forms of set-theoretic platonism, where defining objects by impredicative
quantification is acceptable because the objects being quantified over are
considered to have an existence independent of their definition.
Although the predicativist only accepts definitions that quantify over objects
that are not themselves the subjects of the definition, collections introduced
by previous predicative definitions are legitimate objects that one may quantify
over when defining new objects. Predicative definability is thus an iterable
notion. Given the natural numbers $\Nat$ we may define the collection $R_1$ of
sets definable using only the language of arithmetic and quantifiers ranging
over the natural numbers (in other words, the arithmetical sets). We may then
take the collection of sets definable by formulas in the second-order language
of arithmetic, but where the second order quantifiers are relativized to range
over $R_1$. Since the second order quantifiers do not range over the objects
being defined (as they are restricted to ranging over objects we have previously
guaranteed the existence of), this gives us an expanded collection of
predicatively definable sets of natural numbers $R_2$. The natural next step is
to form a ramified hierarchy of sets of natural numbers. Given a domain
$D \subseteq \mathcal{P}(\omega)$, let $D^*$ be the collection of sets of
natural numbers defined by formulas in the language $\lt$ of second order
arithmetic, $\varphi^D$, where the second order quantifiers in $\varphi$ are
relativized to range over $D$. We then define the ramified analytical hierarchy
by transfinite recursion on ordinals as
\begin{align*}
    R_0            &= \emptyset \\
    R_{\alpha + 1} &= (R_\alpha)^* \\
    R_\lambda      &= \bigcup_{\beta < \lambda} R_\beta
                      \text{ for limit $\lambda$}.
\end{align*}

The ramified theory of types, developed by Russell and Whitehead in
\emph{Principia Mathematica} \citep{WhiRus1925}, proved too cumbersome for real
use in mathematics. Hermann Weyl's predicative development of analysis in
\emph{Das Kontinuum} \citep{Weyl1918} showed that a theory of the arithmetical
sets $R_1$ is already sufficient to develop a substantial portion of classical
analysis, including a sequential form of the least upper bound principle. A
modern formal reconstruction of arithmetical analysis in the mode of Weyl is
given by the system $\aca_0$, defined as $\rca_0$ plus the arithmetical
comprehension axiom: every set definable by an arithmetical formula exists.%
\footnote{%
Here we brush over the details of the connection between Weyl's system and
subsystems of second order arithmetic, for which \citet{Feferman1988} is the
authoritative source. An accessible summary of Weyl's development of
arithmetical analysis can be found in \citet{Feferman2005}.
}
Weyl showed that the amount of mathematics one could develop in his predicative
framework was extensive, and allowed one to recover much of classical analysis.
The reverse mathematics of $\aca_0$ can be viewed as a continuation of this
project, showing us not only what mathematics can be predicatively proved, but
also which theorems cannot be proved in $\rca_0$ or $\wkl_0$, and actually
require arithmetical comprehension. This includes theorems in analysis such as
the Cauchy convergence theorem and the Ascoli lemma, but also theorems from
algebra and combinatorics: that every countable vector space over $\mathbb{Q}$
has a basis, and Ramsey's theorem that for every $k \in \Nat$, every colouring
of $[\Nat]^k$ has a homogeneous set.

Unlike $\wkl_0$, which is only reducible to a finitistic system, and not a
finitistic system in and of itself, $\aca_0$ is a formal system whose axioms can
all be directly justified on predicative grounds. No general, limitative account
of what principles are predicatively acceptable is required in order to show
that $\aca_0$ is a predicative system.
As indicated above, however, predicative definability is an iterable notion, and
consequently Weyl accepted a Principle of Iteration that, as
\citet{Feferman1988}'s analysis demonstrates, goes beyond what his restriction
to the arithmetically definable sets allows.%
\footnote{%
See §7 of \citet{Weyl1918} for a definition of the Principle of Iteration, and
the discussion in \citet{Feferman1988}, especially that on pp.~264--5 of the
revised version in \citet{Feferman1998}.
}
The strength of Weyl's system with the Principle of Iteration therefore exceeds
that of $\aca_0$, as sketched in §8 of \citet{Feferman1988}, although
\citet{FefJag1993, FefJag1996} proved that it is still a predicative system, in
a sense we will now explore.

The iterability of predicative definability suggests that there should be a
correspondingly iterable notion of \emph{predicative provability}, and it is
this notion that is the subject of Feferman and Schütte's influential analysis
of the limits of predicativity. The ramified analytical hierarchy provides a
standard model on which to base the development of predicative theories,
starting with the language. Supplementing the usual first order language of
arithmetic, the \emph{language of ramified analysis} includes a stock of set
variables $X^\beta, Y^\beta, Z^\beta, \dotsc$ for each recursive ordinal
$\beta$. Iterated predicative definability is formalised by a transfinite
progression of formal systems of ramified analysis $\mathbf{RA}_\alpha$. Each
such system has the ramified comprehension scheme
\begin{equation*}
    \exists{X^\beta}\forall{n}( n \in X^\beta \leftrightarrow \varphi(n))
\end{equation*}
for all $\beta \leq \alpha$ and all formulas $\varphi(n)$ in the language of
ramified analysis such that the bound and free set variables in the formula all
have ordinal indices smaller than $\beta$. It also has the following limit rule,
where for all (codes for) limit ordinals $\lambda \leq \alpha$, and each formula
of ramified analysis $\psi(X^\lambda)$ with just $X^\lambda$ free,
if $\psi(X^0), \dotsc, \psi(X^\beta), \dotsc$ for all $\beta < \lambda$, then
$\psi(X^\lambda)$ also holds.

This definition just leaves open how far the iteration process can go and still
be considered predicative. Suspicion naturally attaches to the ordinals indexing
the theories $\mathbf{RA}_\alpha$, for two reasons. Firstly, the question of
whether a recursive linear order $\prec$ is a wellordering is, in general,
impredicative: the statement that $\prec$ is wellordered quantifies over all
sets of natural numbers. Secondly, in the language of ramified analysis there is
no single formula that expresses that $\prec$ is wellordered. As far as this
second issue is concerned, let $\mathrm{WO}^\beta(\prec)$ be the statement that
no set $X^\beta$ codes an infinite descending sequence in the linear order
$\prec$. Then, by the limit rule, any proof in a system of ramified analysis
that $\mathrm{WO}^0(\prec)$ can be lifted to a proof that
$\mathrm{WO}^\beta(\prec)$ for each new $\beta$ introduced. The first issue can
then be resolved by the introduction of the notion of a \emph{predicative
ordinal}. The predicative ordinals are those recursive ordinals which can be
predicatively proved to be wellordered, and the predicatively acceptable systems
of ramified analysis are those indexed by predicative ordinals. This might seem
circular, since we are using the notion of a predicative ordinal in order to
characterise predicative provability, and predicative provability in order to
determine which are the predicative ordinals. But as the following
\emph{autonomy condition} should make clear, this is not the case.
$0$ is a predicative ordinal.
Suppose $\prec$ is a recursive linear order and $\alpha$ is a predicative
ordinal.
If $\mathbf{RA}_\alpha \vdash \mathrm{WO}^0(\prec)$, then the ordinal
$\beta = \mathrm{ot}(\prec)$ is predicative.
This allows us to define predicative provability in terms of autonomous
transfinite progressions of systems of ramified analysis, as follows: a sentence
$\varphi$ of ramified analysis is predicatively provable if there is a
predicative ordinal $\delta$ such that $\mathbf{RA}_\delta \vdash \varphi$.
On the basis of these definitions, \citet{Feferman1964} and \citet{Schutte1964,
Schutte1965} determined the limit of predicativity, namely the least ordinal
that cannot be  proved to be wellordered within an autonomous progression of
systems of ramified analysis. This is the ordinal $\Gamma_0$, also known as the
Feferman--Schütte ordinal, or the ordinal of predicativity.%
\footnote{%
For a more detailed summary and historical background on predicativism,
predicative provability, and predicative reductionism, we refer the reader to
\citet{Feferman2005}.
Complete proofs of Feferman and Schütte's key results can be found in
\citet[ch.~VIII]{Schutte1977} or \citet[ch.~8]{Poh2009}.
}

With the limits of predicativity characterised in this manner,%
\footnote{%
Notwithstanding \citet{Wea2009}'s claim that $\Gamma_0$, as well as larger
recursive ordinals such as the small Veblen ordinal, can be proved to be
wellfounded using only predicative means.
}
we might now ask how much of second order arithmetic can be justified on
predicative grounds. Reducing subsystems of second order arithmetic to
predicative systems has practical benefits for the predicativist, since ramified
systems are so intractable in terms of the actual pursuit of mathematics. It is
also valuable from the point of view of foundational analysis, since it allows
one to determine the amount of mathematics recoverable in a predicative
framework.

A formal system $T$ is \emph{predicatively reducible} if there is an
$\alpha < \Gamma_0$ such that $T$ is proof-theoretically reducible to
$\mathbf{RA}_\alpha$, and \emph{locally predicatively reducible} if it is
proof-theoretically reducible to $\mathbf{RA}_{\Gamma_0} =
\bigcup_{\alpha < \Gamma_0} \mathbf{RA}_\alpha$, where proof-theoretic
reducibility is defined as in §\ref{sec:partial_hilbert}.%
\footnote{%
In his survey of predicativity, \citet{Feferman2005} prefers the terms
\emph{predicatively justifiable} and \emph{locally predicatively justifiable} to
\emph{predicatively reducible} and \emph{locally predicatively reducible}. This
paper sticks to the older terminology.
}
Predicative reductionism thus offers a reductionist program similar in spirit
to the finitistic reductionism discussed in §\ref{sec:partial_hilbert}.%
\footnote{%
The comparison between finitistic reductionism and predicative reductionism has
been considered explicitly in these terms by \citet[§5]{Simpson1985}.
}
By a theorem of \citet*{FriMcASim1982}, the subsystem of second order arithmetic
known as $\atr_0$ is locally predicatively reducible, with proof-theoretic
ordinal $\Gamma_0$. Moreover, it is conservative over $\mathbf{RA}_{\Gamma_0}$
for arithmetical sentences, and even for $\Pi^1_1$ sentences.%
\footnote{
Strictly speaking there is no common notion of a $\Pi^1_1$ statement shared
between the language of second order arithmetic and the language of ramified
analysis. However, given a $\Pi^1_1$ sentence $\varphi$ in the language of
second order arithmetic with bound set variables $X_1, \dotsc, X_n$, such that
$\varphi$ is provable in $\atr_0$, there exists $\alpha < \Gamma_0$ such that
the translation $\varphi^*$ of $\varphi$ into the language of ramified analysis
is provable in $\mathbf{RA}_\alpha$, where $\varphi^*$ is obtained by replacing
each $X_k$ for $1 \leq k \leq n$ with $X^0_k$.
}
So not only does $\atr_0$ agree with the predicative part of ramified analysis
about arithmetical truth, it also proves the same theorems about the
arithmetical properties of all real numbers.

The formal system $\atr_0$ consists of $\aca_0$ plus a scheme of
\emph{arithmetical transfinite recursion}. This states that the arithmetical
operations can be iterated, starting from any set $X \subseteq \mathbb{N}$,
along any countable wellordering; a full formal definition can be found in
\citet[§V.2]{Simpson2009}.
$\atr_0$ is a significant strengthening of $\aca_0$, taking us from classical
analysis to parts of descriptive set theory: arithmetical transfinite recursion
is equivalent over $\rca_0$ to the perfect set theorem (every uncountable closed
set has a perfect subset), Lusin's separation theorem (any two disjoint analytic
sets can be separated by a Borel set), and a number of statements concerning
ordinals, for example that any two countable wellorderings are comparable.

\citet[p.~140]{Bur2010}'s caution about conservativity applies to the case of
$\atr_0$ and predicative provability just as it does to the case of $\wkl_0$ and
finitist provability. The conservativity theorem in this case will, for a proof
$p$ in $\atr_0$ of a $\Pi^1_1$ statement $\varphi$, provide a primitive
recursive function $f$ that transforms $\atr_0$ proofs of $\Pi^1_1$ sentences
into proofs in $\mathbf{RA}_{\Gamma_0}$, so that $f(p)$ is a proof in
$\mathbf{RA}_\alpha$ of $\varphi^*$, for some $\alpha < \Gamma_0$. Nevertheless,
the predicative mathematician cannot immediately conclude that $\varphi^*$ is
predicatively provable, because the Feferman--Schütte analysis of the limits of
predicativity is external to the predicativist standpoint, and thus not
something the predicativist has access to: they cannot, from a predicative
standpoint, prove that all ordinals below $\Gamma_0$ are wellfounded, but can
only verify of particular presentations of ordinals below $\Gamma_0$ that they
do indeed code ordinals. To recognise that $\varphi^*$ is indeed predicatively
provable, the predicativist must first verify that $\alpha$ is a predicative
ordinal, by carrying out the bootstrapped process of proving linear orderings to
be wellfounded within predicative systems of ramified analysis described by the
analysis of predicativity.%
\footnote{%
\citet[p.~140]{Bur2010} writes that
``If we move up to the level of predicativism, the result on the
conservativeness of the system called $\atr_0$ over the system called
$\mathbf{IR}$ has the same character as the result on the conservativeness of
$\wkl_0$ over $\pra$.''
In a similar vein, \citet[p.~154]{Simpson1985} writes that
``\citet{Feferman1964} has argued successfully that his formal system
$\mathbf{IR}$ and others like it constitute a precise explication of predicative
provability.''
The system $\mathbf{IR}$ was introduced by \citet{Feferman1964}, and is
characterised by the $\Delta^1_1$ comprehension rule and the Bar Rule. Strictly
speaking, the quoted remarks are incorrect, since $\mathbf{IR}$ is a not a
predicative system, but instead a locally predicatively reducible one, just as
$\atr_0$ is. Burgess's point about conservativity, properly reformulated in
terms of conservativity over $\mathbf{RA}_{\Gamma_0}$, thus applies to
$\mathbf{IR}$ just as it applies to $\atr_0$.
}

Paralleling the response Burgess sketches on behalf of the proponent of
finitistic reductionism (discussed in §\ref{sec:partial_hilbert}), one might
think that (globally, not merely locally) predicatively reducible subsystems of
second order arithmetic offer a way to gain the benefits of predicative
reductionism without running into the problems faced by locally predicatively
reducible systems such as $\atr_0$. Unfortunately the advantages that working
within predicatively reducible subsystems of second order arithmetic offer over
working with the predicative system $\aca_0$ are minimal: as
\citet[§5]{Simpson1985} stresses, there are few theorems of ordinary
mathematics that are known to be true in the hyperarithmetic sets $\hyp$ (when
viewed as an $\omega$-model) that are not already true in the arithmetical sets
$\arith$. This is salient for predicative provability because $\hyp = 
R_{\omega_1^\mathrm{CK}}$, the sets definable by iterating the ramified
analytical hierarchy up to $\omega_1^\mathrm{CK}$. $\hyp$ is therefore an
extension of the standard model $R_{\Gamma_0}$ of ramified analysis up to
$\Gamma_0$, so if an $\lt$-sentence $\varphi$ is false in $\hyp$, then it is not
predicatively provable.

Simpson argues that the many theorems which provable in $\atr_0$ but which do
not hold in the $\omega$-model $\hyp$ demonstrate the benefits of predicative
reductionism, but not being predicatively provable, these theorems have only
instrumental value to the predicativist: statements such as the perfect set
theorem, or the theorem that any two countable wellorderings are comparable can
be used to prove predicative theorems, but are not themselves predicative.
All this goes to show that the mathematics that can be recovered by reductionist
programs such as finitistic reductionism and predicative reductionism cannot be
simply read off from reverse mathematical results, based on identifications
(such as that of $\wkl_0$ for finitistic reductionism, or $\atr_0$ for
predicative reductionism) of subsystems of second order arithmetic associated
with that foundational framework.


\section{Shore's program}
\label{sec:shore_intro}

However much it may borrow from other areas of mathematical logic, reverse
mathematics is ultimately a proof-theor\-etic endeavour. Given a theorem of
ordinary mathematics, the reverse mathematician seeks to find a subsystem of
$\z_2$ that is equivalent over a weak base theory to the theorem concerned. She
thereby finds the proof-theor\-etic strength of the theorem. Rooted in niceties
of formal systems such as axiom schemes and complexity hierarchies of formulae,
this approach may seem awkward and even unnatural to mathematicians in more
mainstream fields. As number theorist Barry Mazur says
\citep[p.~224, emphasis in original]{mazur2008},
\begin{quote}
    when it comes to a crisis of rigorous argument, the open secret is that, for
    the most part, mathematicians who are not focussed on the architecture of
    formal systems per se, mathematicians who are \emph{consumers} rather than
    \emph{providers}, somehow achieve a sense of utterly firm conviction in
    their mathematical doings, without actually going through the exercise of
    translating their particular argumentation into a brand-name formal system.
\end{quote}
Turning to the specific case of the strength of mathematical theorems,
\citet[p.~381]{Shore2010} contends that most mathematicians do not approach this
task from the viewpoint of reverse mathematics:
\begin{quote}
    While they may concern themselves with (or attempt to avoid) the axiom of
    choice or transfinite recursion, they certainly do not think about (nor
    care), for example, how much induction is used in any particular proof.
\end{quote}
Shore goes on to argue that adopting a computational approach to reverse
mathematics would solve this exegetical problem, providing a natural way for
mathematicians to understand the motivations and results of reverse mathematics.

A computational account of reverse mathematics can be considered plausible only
if mathematical principles have computational content. At least in the case of
arithmetic it is clear that this is true, as demonstrated by the pioneering
results of Gödel, Church, Turing, Post, Kleene and Rosser in the 1930s.
Computability theory holds an important status in reverse mathematics, both in
virtue of its relationship to subsystems of reverse mathematics and because it
provides a battery of tools for proving reverse mathematical results. It is
these principles and techniques which Shore appeals to when constructing his
account of computational reverse mathematics.

In particular, the major subsystems of second order arithmetic correspond to
principles from computability theory. As well as shedding light on the model
theory of these systems, these connections give us the basis for Shore's
computational reverse mathematics. The foundation of these correspondences lies
in the notion of an \emph{$\omega$-model}. These are $\lt$-structures whose
first order parts consists of the standard natural numbers
$\omega = \Set{ 0, 1, 2, \dotsc }$, and whose arithmetical vocabulary is
interpreted in the standard way, with a second order part
$\mathcal{S} \subseteq \mathcal{P}(\omega)$. $\omega$-models are thus uniquely
distinguished by their second order parts, and which sentences of $\lt$ a given
$\omega$-model $M$ satisfies is determined entirely by the sets in its second
order part. In the rest of the paper we shall therefore allow ourselves some
sloppiness and identify $\omega$-models with their second order parts whenever
no ambiguity is possible.

Since $\omega$-models of $\rca_0$ satisfy recursive comprehension, they are
closed under Turing reducibility: given an $\omega$-model $\mathcal{S}$ of
$\rca_0$, if $X \in \mathcal{S}$ and $Y \Tleq X$, then $X \in \mathcal{S}$. They
are also closed under recursive joins: if $X, Y \in \mathcal{S}$, then
$X \oplus Y \in \mathcal{S}$, where the recursive join operation is defined as
\begin{equation}
    X \oplus Y =
        \Set{ 2x | x \in X }
        \cup
        \Set{ 2y + 1 | y \in Y }.
\end{equation}
Subsets of $\mathcal{P}(\omega)$ which are closed under Turing reducibility and
recursive joins are known as \emph{Turing ideals}, and the $\omega$-models of
$\rca_0$ are precisely the Turing ideals.

Similar closure conditions apply to the $\omega$-models of the other main
subsystems of second order arithmetic. $\omega$-models of $\aca_0$ are Turing
ideals, since $\rca_0$ is a subtheory of $\aca_0$, but these models are also
closed under the Turing jump operator, while those of $\pica_0$ are closed under
the hyperjump. Computability-theor\-etic closure conditions also characterise
the $\omega$-models of the intermediate systems $\wkl_0$ and $\atr_0$. The
$\omega$-models of $\wkl_0$ are related to the Jockush--Soare low basis theorem
\citep{JocSoa1972}. The $\omega$-models of $\atr_0$ are closed under
hyperarithmetic reducibility, although there are some subtleties here; see
§VIII.4 and §VIII.6 of \citet{Simpson2009}. The Big Five thus correspond closely
to a hierarchy of computational principles of increasing power.

Shore proposes a new approach to reverse mathematics based on taking these
com\-put\-abil\-ity-theoretic characterisations of the $\omega$-models of
subsystems of $\z_2$ at face value as measuring the complexity of the theorems
equivalent to those systems. In place of the usual relations employed in reverse
mathematics, namely provability and logical equivalence over a weak base theory,
he offers the notions of \emph{computable entailment} and \emph{computable
equivalence}.

\begin{dfn}
    \label{dfn:comp_entail_equiv}
    Let $\mathcal{C}$ be a Turing ideal, and let $\varphi$ be a sentence of
    second order arithmetic. $\mathcal{C}$ \emph{computably satisfies} $\varphi$
    if $\varphi$ is true in the $\omega$-model whose second order part consists
    of $\mathcal{C}$. A sentence $\psi$ \emph{computably entails} $\varphi$,
    $\psi \models_c \varphi$, if every Turing ideal $\mathcal{C}$ computably
    satisfying $\psi$ also computably satisfies $\varphi$. Two sentences $\psi$
    and $\varphi$ are \emph{computably equivalent}, $\psi \equiv_c \varphi$, if
    each computably entails the other.
    These definitions extend to theories in the standard way.
\end{dfn}

Computable entailment removes any need for an explicit base theory: this role is
instead played by the restriction of the class of models under consideration to
$\omega$-models whose second order parts are Turing ideals. The $\omega$-models
of $\rca_0$ are precisely those models, so the base theory has not disappeared
entirely, but manifested itself in a different way, by being baked into the
definition of the computable entailment relation. Since not all $\lt$-structures
are $\omega$-models, failures of computable entailment are stronger than
failures of logical implication over $\rca_0$, since the former entails the
latter, but not vice versa. Conversely, computable entailment is weaker than
logical implication over $\rca_0$.
By the Henkin--Orey completeness theorem for $\omega$-logic \citep{Henkin1954,
Orey1956}, the computable entailment relation is extensionally equivalent to
allowing unrestricted use of the $\omega$-rule in $\rca_0$. The $\omega$-rule is
an infinitary rule of inference that, from the infinite set of premises
$\varphi(0), \varphi(1), \dotsc, \varphi(\overline{n}), \dotsc$ for all numerals
$\overline{n}$, one may infer the universal statement $\forall{n}\varphi(n)$.
Proofs using the $\omega$-rule can be represented by wellfounded, countably
branching trees.
Second order arithmetic with the $\omega$-rule is complete for $\Pi^1_1$
sentences, but not for $\Sigma^1_1$ sentences \citep{Rosser1937}.

Shore puts forward a number of considerations in support of his proposal.

\begin{itemize}
    \item[(1)] Computational reverse mathematics unites proofs of implications
        (and hence equivalences) with proofs of nonimplications (and hence
        inequivalences).
    \item[(2)] Computational reverse mathematics offers a more direct route to
        the complexity measures we take to underpin the identification of the
        strength of theorems of ordinary mathematics.
    \item[(3)] Computational mathematics is a more natural framework for
        ordinary mathematicians.
    \item[(4)] Computational reverse mathematics provides a way to deal directly
        with uncountable structures and extend reverse mathematics to the study
        of theorems concerning essentially uncountable structures.
\end{itemize}

These considerations can be understood as arguments for two quite different
conclusions. The first is that computable entailment is an important
reducibility notion with intrinsic mathematical interest, one which merits and
will reward further study.%
\footnote{%
A comparable notion that provides a finer-grained degree structure is Weihrauch
reducibility, recently taken up in the reverse mathematics context by
\citet*{DorDzhHir2015}, but a comparison of Weihrauch reducibility to
computational reverse mathematics is beyond the scope of this paper.
}
The second is that it should be the primary tool we use to carry out the general
task of reverse mathematics, namely to show what mathematical resources are
needed to prove ordinary mathematical theorems. Shore does not explicitly
endorse either of these options; neither does he propose computational reverse
mathematics as a framework in which to carry out foundational analysis. However,
by reviewing the reasons underlying his proposal, it becomes clear that
arguments based on these considerations make adopting computational reverse
mathematics as a first-class replacement for classical reverse mathematics a
serious option. This being the case, it seems reasonable to wonder whether his
framework can contribute to the analysis of foundational programs just as
classical reverse mathematics does. Before attempting to answer this question,
we first analyse in detail Shore's considerations in favour of computational
reverse mathematics.

\subsection{Unity of reverse mathematics}
\label{ssec:unity_rm}

The first consideration in favour of computational reverse mathematics is that
(1) computable entailment unites proofs of implications with the existing
practice of using computability-theoretic tools to construct Turing ideals
witnessing the failures of implications, bringing a unity to the methods of
proof in reverse mathematics. The procedure is particularly straightforward
when the sentences in question are $\Pi^1_2$, where the following template
applies. Given $\Pi^1_2$ statements
$\Phi \equiv \forall{X}\exists{Y}\varphi(X, Y)$ and
$\Psi \equiv \forall{X}\exists{Y}\psi(X, Y)$,
one constructs a Turing ideal $\mathcal{C}$ where for every $X \in
\mathcal{C}$, there exists an $Y \in \mathcal{C}$ such that $\varphi(X, Y)$,
but there is no $Y \in \mathcal{C}$ such that $\psi(X, Y)$. $\mathcal{C}$ is
therefore the second-order part of an $\omega$-model that satisfies $\rca_0$ and
$\Phi$, but not $\Psi$, and so $\rca_0$ does not prove that $\Phi$ implies
$\Psi$.
Since many important mathematical theorems are $\Pi^1_2$, the technique is
widely applicable. For instance, to show that weak König's lemma does not imply
arithmetical comprehension, we use the Jockush--Soare low basis theorem to prove
the existence of an $\omega$-model $M$ of $\wkl_0$ in which all sets are low.
Such a model will not contain $0^\prime$, and thus $M \not\models \aca_0$, since
$\aca_0$ proves the existence of the Turing jump.

It is clear how Shore's observation leads to his definition of the computable
entailment relation, but to use it to support the adoption of computational
reverse mathematics as the preferred general methodology in reverse mathematics,
we must formulate a more explicit argument. The following reconstruction seems
reasonable: that (1a) unity in methods of proof in reverse mathematics is a
theoretical virtue, and that (1b) computational reverse mathematics possesses
this unity in greater degree than classical reverse mathematics. Granting (1a)
is compatible with there being other theoretical virtues, such as tractability,
not requiring strong background assumptions, and so on. We stipulate for the
sake of argument that (1c) the other theoretical virtues of computational
reverse mathematics are at least as great as those of classical reverse
mathematics. From (1a--c), it follows that computational reverse mathematics is
a more theoretically virtuous framework than classical reverse mathematics in
which to carry out reverse mathematics.

In the next two sections we shall discover that we have reason to doubt (1c),
but for now let us grant it and concentrate on the more immediately problematic
premise (1b). By the soundness theorem for first order logic, from exhibiting a
model of $\rca_0 + \Phi + \neg\Psi$ we can infer that there is no proof in
$\rca_0$ of $\Phi \rightarrow \Psi$.
This shows that there is a unity in the methods of proof of classical reverse
mathematics: to demonstrate an implication we prove that the formal system
$\rca_0$ proves it, and to demonstrate a nonimplication we prove that the formal
system $\rca_0$ does not prove the relevant implication. The soundness and
completeness theorems for first order logic unite proof-theoretic and model-%
theoretic methods. If computational reverse mathematics has a greater unity of
methods of proof, it can only be in a methodological sense, since as Shore
points out, in practice the vast majority of proofs of nonimplications are
carried out by constructing Turing ideals.
But if the unity of methods of proof we are concerned with is merely
methodological, concerning how practitioners happen to prove nonimplications in
classical reverse mathematics, then premise (1a) starts to look shaky, since it
seems entirely reasonable to take the unity of methods of proof (in this
methodological sense) to be a purely instrumental virtue of a framework for
reverse mathematics, and not a theoretical one. Even if we grant that
computational reverse mathematics is better, from an instrumental point of view,
than classical reverse mathematics as a framework for reverse mathematics, this
seems like a minor reason to adopt it when weightier theoretical considerations
are on the table.

\subsection{Complexity and difficulty of proof}
\label{ssec:complexity_difficulty}

A more substantial motivation can be found in Shore's suggestive remarks about
the relationship between degrees of computability and methods of proof. The
essential point is as follows. Proving a theorem that can be stated in the form
``For all $X$ such that \ldots, there exists a $Y$ such that \ldots''---that is
to say, as a $\Pi^1_n$ sentence---can be understood as providing, given as input
a countably representable mathematical structure $\mathcal{A}$, a function or
relation $\mathcal{F}$ on $\mathcal{A}$. As we have seen, given some
$\mathcal{A}$, a corresponding $\mathcal{F}$ can be more or less complex,
depending on the theorem. In classical reverse mathematics, the proof-theoretic
strength of the theorem depends on where such $\mathcal{F}$ can be found in the
hierarchy of Turing degrees relativized to $\mathcal{A}$. Shore proposes that we
give a direct formulation of this complexity measure---the difficulty of
computing a solution (the function or relation $\mathcal{F}$) given a problem
(the structure $\mathcal{A}$)---rather than mediating it through first order
logical theories. Doing so ``formalizes the intuition that `being harder to
prove' means `harder to compute''' \citep[p,~153]{Shore2013}. Taking this
underlying intuition into account, the appeal of Shore's approach becomes more
clear: since computation across nonstandard models of arithmetic is highly
non-absolute, restricting the interpretation of arithmetical vocabulary to the
standard natural numbers $\omega$ allows us to fix an single underlying notion
of computation, and thus the Turing reducibility relation $\Tleq$ that allows us
to compare the complexity of solutions provided by theorems directly. This is
not available within the classical reverse mathematics framework where the base
theory $\rca_0$ provides at best a weak constraint on the models of the
arithmetical part of the theory.

We can regiment this argument as follows.
(2a) The difficulty of proving a $\Pi^1_n$ sentence
$\forall{X}\exists{Y}\varphi(X, Y)$ is just the difficulty of, given a problem
$X \subseteq \omega$,
computing a solution $Y \subseteq \omega$ such that $\varphi(X, Y)$.
(2b) Computable entailment captures the difficulty of---given a problem
$X \subseteq \omega$---computing a solution $Y \subseteq \omega$ such that
$\varphi(X, Y)$ better than $\rca_0$-provability does.
Therefore, computable entailment captures the difficulty of proving $\Pi^1_n$
sentences better than $\rca_0$-provability.

The content of the intuition (2a) that Shore takes his view to be formalising
is, at least on a naïve reading, somewhat problematic. Firstly, the
mathematician's standard understanding of difficulty of proof does not square
with Shore's account: theorems in number theory are not thought less difficult
to prove just because the solutions are not highly uncomputable. Secondly, while
the notion of degrees of uncomputability is clear, it is arguable whether this
measures difficulty of computation. Since the Turing jump and True Arithmetic
are both uncomputable sets, and thus both intractable problems for finite
computers, even idealised ones with unbounded time and space such as Turing
machines, there is good reason to consider both of these sets equally difficult
to compute, viz., impossible. We shall have more to say on this point in
§\ref{sec:complexity}, but for now let us try to sidestep these issues by
reformulating the argument above.

To do so, we must reject the naïve reading of the two notions that figure in
Shore's equivalence between difficulty of proof and difficulty of computation.
Instead of the mathematician's standard understanding of difficulty of proof,
let us take ``being harder to prove'' to mean ``requiring stronger axioms to
prove'', since this is the standard of difficulty salient to reverse
mathematics.
To resolve the second difficulty, we replace the vague statement ``harder to
compute'' with an explicit reference to the notion of relative computability.
The reformulated argument then runs as follows.
(2a*) The strength of axioms which entail a $\Pi^1_n$ sentence
$\forall{X}\exists{Y}\varphi(X, Y)$ is precisely captured by---given a problem
$X \subseteq \omega$---the degrees of uncomputability of solutions
$Y \subseteq \omega$ such that $\varphi(X, Y)$.
(2b*) Computable entailment captures the degrees of uncomputability---given a
problem $X \subseteq \omega$---of solutions $Y \subseteq \omega$ such that
$\varphi(X, Y)$ better than $\rca_0$-provability does.
Therefore, computable entailment captures the strength of assumptions which
entail $\Pi^1_n$ sentences better than $\rca_0$-provability.

Premise (2b*) is well supported, given the non-absoluteness of Turing
computability across nonstandard models (modulo the usual questions about the
existence of the unique standard natural number structure $\omega$, and the
background theory needed to establish it). We therefore focus on (2a*), which is
distinctly more plausible than (2a), since we already know that the Big Five
subsystems of second order arithmetic are based on computability-theoretic
principles, and that the hierarchy of proof-theoretic strength of these systems
correlates with the Turing degree of solutions whose existence is asserted in
theorems equivalent to these systems.

A contentious point in assessing (2a*) is the status of induction axioms.
\citet{Neeman2011} proves that $\Sigma^1_1$ induction is required to prove that
Jullien's indecomposability theorem implies weak $\Sigma^1_1$ choice: neither
the $\Sigma^0_1$ induction axiom of $\rca_0$, nor even $\Delta^1_1$ induction,
suffices. If we take the classical reverse mathematics framework to privilege
$\rca_0$ as a base theory then this is prima facie problematic. Since
computational reverse mathematics fixes the first order part of the model as the
standard natural numbers $\omega$, it does not have this difficulty.
One problem with this view is that induction axioms are also a form set of
existence principle, namely bounded comprehension schemes for finite sets. The
$\Sigma^0_1$ induction scheme is equivalent over $\rca_*$ to bounded
$\Sigma^0_1$ comprehension: the scheme that for every $n \in \Nat$ the set
$X = \Set{ k < n | \varphi(k) }$ exists, where $\varphi(x)$ is any $\Sigma^0_1$
formula. Stronger induction schemes are likewise equivalent to stronger bounded
comprehension schemes. \citet{SimSmi1986} show that a number of theorems from
algebra are equivalent over $\rca_*$ to $\Sigma^0_1$ induction. This undermines
the idea that classical reverse mathematics should be identified with reverse
mathematics in $\rca_0$, but perhaps this is no bad thing.
Computational reverse mathematics, on the other hand, appears to trivialise
bounded comprehension schemes, since they are all computably entailed. $\Pi^1_n$
theorems, central to all of reverse mathematics, thus become almost the only
subject countenanced (further reverse mathematical equivalences trivialised by
the computable entailment relation are considered in §\ref{sec:justification}).

\subsection{Accessibility of reverse mathematics}
\label{ssec:accessibility_rm}

Another of Shore's motivations in introducing computational reverse mathematics
is expository: making the tools and results of reverse mathematics more
accessible to ordinary mathematicians who do not think, as logicians do, in
terms of formal theories and proof systems. Although it still involves
formalisation, computational reverse mathematics does allow us to sidestep some
formal aspects of reverse mathematics. Instead of proving equivalences to
syntactically defined subsystems of second order arithmetic, we can work
directly with computability-theoretic and combinatorial closure conditions.
Moreover, it allows us to use induction in the standard way, for any property,
not just those definable in the language of arithmetic with a single first order
quantifier.

Making reverse mathematics more accessible to ordinary mathematicians is clearly
a valuable goal, but since computable entailment is not extensionally equivalent
to provability in the standard base theory $\rca_0$, this is not by itself
sufficient to motivate our adoption of computational reverse mathematics over
classical reverse mathematics. In order to provide the relevant motivation, we
could think of this as an instance of naturalistic deferral to mathematical
practice, that is, (3) computable entailment is the right way to measure the
strength of theorems because it better captures the way ordinary mathematicians
work with the objects these theorems concern. In particular, ordinary
mathematicians  ``do not think about (nor care) [\ldots] how much induction is
used in any particular proof'' \citep[p.~381]{Shore2010}. Moreover, definitions
and inferences in informal mathematical practice are not carried out within a
fixed formal framework. Working only with $\omega$-models thus reflects ordinary
mathematical practice as, in practice, mathematicians consider the natural
numbers to be categorical and to satisfy induction for all predicates.

\subsection{Uncountable reverse mathematics}
\label{ssec:uncountable_rm}

Since second order arithmetic does not allow quantification over uncountable
sets, classical reverse mathematics is necessarily limited to treating theorems
concerning objects that are either countable, or can be represented by countable
codes. Computational reverse mathematics offers a way to overcome this barrier
and allow for the development of a reverse mathematics of uncountable
mathematics, by using one of the existing definitions of computation on
uncountable mathematical structures.%
\footnote{%
Others have also proposed ways to extend reverse mathematics to the uncountable,
most notably \citet{Kohlenbach2002, Kohlenbach2005} who has developed an
approach in higher-order arithmetic.
}
This is done by altering the definition of computable entailment to quantify
not over Turing ideals, but over over classes of sets that are closed under a
different notion of relative computability appropriate to the particular
uncountable setting. \citet{Shore2013} takes some initial steps in this
direction by developing a variation of computable entailment that uses
$\alpha$-recursion theory, along with analogues of $\aca$ and $\wkl$ in this
setting, and proving some reverse mathematics-style results for them.

This suggests the following argument.
(4a) A framework for reverse mathematics that allows us to analyse the strength
of theorems throughout ordinary mathematics (i.e. not just countable and
countably-representable mathematics) is superior to one that does not.
(4b) Computational reverse mathematics allows us to carry out such an analysis.
(4c) Classical reverse mathematics does not.
Therefore, computational reverse mathematics is a superior framework in which to
carry out reverse mathematics.

Little work has been done in this setting beyond Shore's initial papers, so its
full promise remains as yet unfulfilled. Nevertheless, the adaptability of
Shore's approach to different settings, with the underlying notion of
computation allowed to vary so as to provide an appropriate measure of the
computational strength of theorems in those settings, suggests a highly
promising route to a reverse mathematics of the uncountable.
That being said, if a computational reverse mathematics of the uncountable turns
out to be a fruitful approach, there is no obvious barrier to developing an
axiomatic counterpart that stands in a similar relation to, for example,
$\alpha$-recursion on uncountable ordinals as $\rca_0$ does to Turing
reducibility.
So even accepting (4a), the status of (4b) and (4c) remains unclear. It is
therefore difficult to evaluate the extent to which its extensability to
uncountable structures weighs in favour of computational reverse mathematics.

Intriguingly, \citet{Shore2010} suggests that computational reverse mathematics
of the uncountable will also provide a testing ground for notions of
computability on uncountable sets, and that (p.~387)
``if a theory of computability for uncountable domains provides a satisfying
analysis of mathematical theorems and constructions in the reverse mathematical
sense based on the approach of [definition \ref{dfn:comp_entail_equiv}], then it
has a strong claim to being a good notion of computation in the uncountable.''
This would result in a virtuous circle of justification. On the one hand, the
success of a notion of computation on uncountable sets in providing a reverse
mathematics of the uncountable would vindicate it as the correct notion of
computation in the uncountable setting. On the other, its status as the correct
notion of computation in the uncountable setting would support its use as the
notion of computation underlying the reverse mathematics of the uncountable.
Alternatively, it may turn out that
``there is no single `right' [notion of computation on uncountable sets] but
that certain ones may be better than others for different branches of
mathematics'' \citep[p.~387]{Shore2010}.
If pursuing computational reverse mathematics could help answer these questions
it would certainly strengthen the case for doing so, but that it might do so
does not provide a prima facie reason for preferring it over classical reverse
mathematics as our general framework for reverse mathematics.

\subsection{Closure conditions and the standard view of reverse mathematics}
\label{ssec:closure}

The standard view in reverse mathematics holds that the significance of
reversals lies in the set existence principles they show to be necessary to
prove ordinary mathematical theorems.%
\footnote{%
The locus classicus of this view is \citet[p.~2]{Simpson2009}.
}
However, the relevant concept of a set existence principle, as used in reverse
mathematics, has not been explicated in any detail. \citet{DeaWal2016} have
argued that such a concept cannot be exhausted by the notion of a comprehension
principle, since this would leave out weak König's lemma and arithmetical
transfinite recursion. In \citep{Eas2016b} I argue that even if one includes
both comprehension principles and separation principles in the concept of a set
existence principle, thereby incorporating all of the Big Five, there are other
mathematically natural set existence principles that are left out, such as weak
weak König's lemma.%
\footnote{%
The restriction of weak König's lemma to trees of positive measure.
}

\citet{Shore2010}'s emphasis on the characterisation of the Big Five in terms of
com\-put\-abil\-ity-theor\-etic closure conditions suggests that we could
understand the concept of a set existence principle in terms of that of a
closure condition on the powerset of the natural numbers. I advance a view along
these lines in \citep{Eas2016b}, where I take closure conditions as being
axiomatized by those $\Pi^1_n$ sentences ($n \geq 2$) that are not equivalent to
any less complex sentence.%
\footnote{%
Here ``equivalent'' is to be understood as meaning provably equivalent in any
suitable base theory $B$ that does not prove the sentence in question, but which
can otherwise be as strong as possible. Sentences of this sort are referred to
in \citep{Eas2016b} as \emph{essentially $\Pi^1_{n\geq2}$ sentences}; see \S 5
of that paper for a detailed account of these notions.
}
This account of set existence principles as closure conditions has a number of
advantages, including its generality, since it not only includes all of the Big
Five and weak weak König's lemma, but also other statements such as choice
principles and Ramsey-like combinatorial statements. It also has an intuitive
appeal, as attested by a number of authors who have identified set existence
principles with closure conditions, albeit without providing a precise account
of which sentences of second order arithmetic axiomatize closure conditions.%
\footnote{%
For instance \citet[p.~8]{Feferman1964}, \citet[p.~451]{Feferman1992},
\citet*[p.~2]{DorDzhHir2015}, and \citet*[p.~864]{ChoSlaYan2014}.
}

Despite these appealing characteristics, the details of the account demonstrate
an oddity, namely the status of induction axioms. Instances of the arithmetical
induction scheme $\Pi^0_\infty\text{-}\mathsf{IND}$ are at most of $\Pi^1_1$
complexity, and thus on this account are not considered to be set existence
principles.%
\footnote{Cf. \citet[pp.~71--2]{Simpson2009}, who argues that ``despite
appearances, the $\Sigma^0_1$ induction axiom of $\rca_0$ can be considered to
be a set existence axiom'', due to its equivalence to the scheme of bounded
$\Sigma^0_1$ comprehension.}
More complex fragments of the full induction scheme, however, such as
$\Sigma^1_1$ induction, will be axiomatized by $\Pi^1_n$ sentences, $n \geq 2$,
and will not be equivalent to less complex sentences except by a base theory
that proves them outright. They should thus, according to the view put forward
in \citep{Eas2016b}, be considered closure conditions, and thus set existence
principles. This is counterintuitive, since instances of induction are typically
taken to concern the structure of the natural numbers, not the structure of its
powerset.

This entanglement between the first and second order parts of the theory arises
because of the existence of non-standard models of arithmetic: the standard
natural numbers $\omega$ satisfy not just the full induction scheme
$\Pi^1_\infty\text{-}\mathsf{IND}$, but the induction scheme formulated in any
language, no matter how expressively powerful. Moreover, all finite sets are
already present in any model satisfying recursive comprehension, since finite
sets are by definition recursive. It is only when we consider non-standard
models that bounded comprehension schemes are required to ensure the existence
of ``finite'' sets of size $k$, where $k$ is a non-standard number.

Shore's view offers us a natural response to this problem. Since in
computational reverse mathematics we take the first order part of the model to
be fixed as the standard natural numbers $\omega$, we can rule out instances of
induction as axiomatizing closure conditions, for they will already be satisfied
by any base theory $B$, as the first order variables of sentences of $B$ will
always range over standard natural numbers. The view that set existence
principles are exactly the closure conditions axiomatized by essentially
$\Pi^1_{n\geq2}$ sentences works smoothly in Shore's framework, without
counterintuitive instances of induction masquerading as closure conditions on
$\PowN$. Computational reverse mathematics thereby seems to offer us a better
account of the concept of a set existence principle than classical reverse
mathematics, and thus a more satisfactory way of vindicating the standard view
that reversals are significant because they demonstrate the set existence
principles necessary to prove theorems of ordinary mathematics.

\section{Preservation of justification under computable entailment}
\label{sec:justification}

Computable entailment collapses many distinctions present under the usual
classical entailment relation, and thus the equivalence classes obtained under
the computable equivalence relation are significantly different from those given
by provable equivalence over $\rca_0$. For instance, the standard natural
numbers satisfy the induction scheme for all predicates in the language of
second order arithmetic. As a result, systems with only restricted induction and
their counterparts with the full induction scheme are computably equivalent. The
presence of full induction is indicated by the absence of the `$0$' subscript in
the system's name: $\rca$ is $\rca_0$ but with full induction, $\wkl$ is
$\wkl_0$ with full induction, and so on. In all cases, the system with full
induction has precisely the same $\omega$-models as its counterpart with
restricted induction, and thus they are computably equivalent.

This presents a problem given the connections between the Big Five and existing
philosophically-motivated programs in the foundations of mathematics. At least
in some cases these subsystems are formalisations in second order arithmetic of
those foundational programs, but it is by no means obvious that the same is
true for other axiom systems which are computably equivalent to them. $\aca_0$
is a predicative system, but the mere fact that $\aca$ is computably equivalent
to it should not compel us to believe that $\aca$ is similarly predicatively
acceptable.

Another way to understand this point is by considering that a key property of
any entailment relation is preserving justification: if we are justified in
accepting the antecedent then we are justified in accepting the consequent. For
computational reverse mathematics to be capable of the foundational analysis
outlined earlier, computable entailment must preserve justification, just as
deductive entailment does. Given any foundational program that we wish to
analyse by proving reverse mathematical results, those results must be justified
on the conception of justification internal to the foundational program itself.
If computable entailment fails to satisfy this requirement then proponents of
such foundational programs will be unmoved by any arguments drawn from
computational reverse mathematics, as they will reject the underlying assumption
necessary to proving the results involved.
In other words, the crux of the issue is not whether computable entailment
preserves justification on some particular account of the epistemology of
mathematics, but whether it respects the justificatory structure of the
foundational programs being analysed.

In §\ref{sec:partial_hilbert} we examined \citet{Simpson1988}'s claim that the
proof-theoretic reducibility of $\wkl_0$ to $\pra$ constitutes a partial
realization of Hilbert's program. There are reasons to question whether
Simpson's interpretation of Hilbert is correct, and plenty of debate to be had
over whether this is in fact a good foundation for mathematics. Nevertheless,
the finitistic reductionism that Simpson proposes is nonetheless a foundational
enterprise worthy of consideration.
One part of such an assessment consists of the use of reverse mathematical
methods to determine the parts of ordinary mathematics that can be developed
within this foundational framework, that is, foundational analysis as studied in
the preceding sections. In order to apply it in this way, our system of reverse
mathematics should therefore be able to analyse Simpson's finitistic
reductionism, and as argued above, that analysis should respect the
justificatory structure of finitistic reductionism.
With this concern in mind, the crucial question is whether or not finitistic
reductionism can be extended from $\wkl_0$ to include all systems $T$ that are
computably equivalent to $\wkl_0$. Only if this is the case can we conclude that
Shore's computational reverse mathematics respects its justificatory structure.

One system that is computably equivalent to $\wkl_0$ is the system $\wkl$. As
mentioned earlier, this system augments $\wkl_0$ with the full induction scheme.
If computable entailment is to preserve justification for the Tait-style
finitist, then $\wkl$ must also be finitistically reducible. But the presence of
the full induction scheme means that, as we shall see below, $\wkl$ proves the
consistency of $\pra$. Therefore, it is not finitistically reducible to $\pra$,
since the canonical formal consistency statement $\Con(\pra)$ is a $\Pi^0_1$
statement that $\pra$ does not (if it is, in fact, consistent) prove. In other
words, it rules out the possibility of a finitistic reduction of the sort
delivered by Sieg for $\wkl_0$, and thus rules out the possibility that $\wkl$
is a finitistically reducible system.

Recall that $\isig{n}$ is the fragment of Peano arithmetic obtained by
restricting the induction scheme to $\Sigma^0_n$ formulae. The following is a
standard result in the literature on first-order arithmetic. A full proof can be
found in \citet[§I.4]{HajPud1993}.

\begin{fact}
    \label{fact:isig_con}
    $\isig{n + 1}$ proves the consistency of $\isig{n}$.
\end{fact}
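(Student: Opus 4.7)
The plan is to exhibit, inside $\isig{n+1}$, a partial truth predicate for $\Sigma^0_n$-formulas and then prove a soundness theorem for $\isig{n}$ with respect to that predicate. Consistency follows because the formally false equation $0=1$ cannot be declared true by any predicate satisfying the Tarski conditions.

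Concretely, the first step is to recall (or reconstruct from the arithmetization of syntax) a $\Sigma^0_n$-formula $\mathrm{Sat}_n(\gcode{\varphi}, s)$ that, provably in $\isig{n}$, satisfies the compositional Tarski clauses restricted to $\Sigma^0_n$-formulas $\varphi$ and finite assignments $s$. The standard construction is uniform in $n$: one uses the characterisation of $\Sigma^0_n$-formulas in prenex normal form with a $\Delta^0_0$ matrix, and uses a $\Sigma^0_n$-definable partial satisfaction relation whose totality on $\Sigma^0_n$-formulas is provable by unfolding the quantifier block. Since this construction and its basic properties are a textbook exercise in the arithmetization of syntax, I would just cite them; the key fact we need is that, for $\Delta^0_0$-sentences $\varphi$, $\isig{0}$ proves $\mathrm{Sat}_n(\gcode{\varphi}) \leftrightarrow \varphi$.

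The second step is to formalise, in $\isig{n+1}$, the soundness statement: for every $\isig{n}$-proof $p$, the end-sentence of $p$ is satisfied by $\mathrm{Sat}_n$. Proceeding by induction on the length of $p$, I would verify that (i) each logical axiom and each basic axiom of $\isig{n}$ is declared true by $\mathrm{Sat}_n$, (ii) the inference rules of predicate calculus preserve truth under $\mathrm{Sat}_n$, and (iii) each instance of the $\Sigma^0_n$-induction scheme is declared true. The crucial complexity check is in (iii): the statement ``every instance of $\Sigma^0_n$-induction is satisfied by $\mathrm{Sat}_n$'' has complexity $\Pi^0_{n+1}$, because it universally quantifies over $\Sigma^0_n$-formulas and then unfolds through the $\Sigma^0_n$ predicate $\mathrm{Sat}_n$, so carrying out the induction on proof length requires $\Sigma^0_{n+1}$-induction --- exactly what $\isig{n+1}$ provides. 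Once soundness is established, note that $\mathrm{Sat}_n$ decides the atomic sentence $0=1$ as false (provably in $\isig{0}$), so $0=1$ is not provable in $\isig{n}$, which is $\Con(\isig{n})$.

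The main obstacle is the bookkeeping in step two, and in particular verifying that the complexity of the induction used really is $\Sigma^0_{n+1}$ and not higher. In practice this amounts to being careful about how the truth predicate interacts with the induction axioms: the satisfaction clause for $\Sigma^0_n$-induction must be expressed so that its universal closure over $\Sigma^0_n$-formulas and assignments falls inside $\Pi^0_{n+1}$, which in turn means recasting ``provable in $\isig{n}$'' via a $\Sigma^0_1$-predicate $\mathrm{Prf}_{\isig{n}}(p, \gcode{\varphi})$ and running the induction on the length of $p$ with the $\Pi^0_{n+1}$-predicate ``for every sentence $\psi$ occurring in an initial segment of $p$, $\mathrm{Sat}_n(\gcode{\psi})$ holds''. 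Everything else is a routine application of the Tarski compositionality clauses provable in $\isig{n}$, and I would refer to \citet[§I.4]{HajPud1993} for the remaining details.
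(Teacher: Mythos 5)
The paper itself does not prove this fact: it is stated as a standard result with a pointer to \citet[\S I.4]{HajPud1993}, so the only question is whether your sketch would actually go through, and as written it has a genuine gap at exactly the step you flag as ``the main obstacle''. The invariant of your induction on proof length --- ``for every sentence $\psi$ occurring in an initial segment of $p$, $\mathrm{Sat}_n(\gcode{\psi})$ holds'' --- is not even well formed, because the lines of an $\isig{n}$-proof of $0=1$ are not $\Sigma^0_n$-formulas: every instance of the $\Sigma^0_n$-induction scheme is, in prenex form, a $\Pi^0_{n+2}$ sentence, and a Hilbert-style derivation has no subformula property, so modus ponens can pass through formulas of arbitrarily high quantifier complexity, which no single partial truth predicate covers. (This is precisely why the existence of the predicates $\mathrm{Sat}_n$ does not conflict with G\"odel's second incompleteness theorem.) To run the soundness induction one must first control the complexity of the formulas occurring in proofs, e.g.\ by formalized free-cut elimination in a sequent calculus (available in $\isig{1}$, since superexponentiation is provably total there) or by the restricted provability notions used by H\'ajek and Pudl\'ak; your sketch contains no such step.

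Second, the complexity bookkeeping you present as the crucial check is off by one. The truth of a single induction instance, unpacked compositionally through the $\Sigma^0_n$ predicate $\mathrm{Sat}_n$, is a Boolean combination of a $\Sigma^0_{n+1}$ and a $\Pi^0_{n+1}$ statement, so the universal closure over $\Sigma^0_n$-formulas and parameters is $\Pi^0_{n+2}$, not $\Pi^0_{n+1}$; likewise the repaired invariant (truth of all lines of a free-cut-free proof, which may be $\Pi^0_{n+2}$) sits at a level that $\Sigma^0_{n+1}$-induction does not handle naively. The fact is of course true, but bridging this mismatch inside $\isig{n+1}$ --- via restricted consistency statements and reflection for the appropriate formula classes --- is where the real work of the textbook proof lies, so deferring ``the remaining details'' to \citet[\S I.4]{HajPud1993} in effect defers the entire content of the argument. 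A minor further point: the Tarski properties of the $\Delta^0_0$ and $\Sigma^0_n$ satisfaction predicates already require exponentiation, so their natural base theory is $\mathrm{I}\Delta_0+\mathrm{exp}$ rather than $\isig{0}$; this is harmless here only because everything is ultimately verified inside $\isig{n+1}\supseteq\isig{1}$, and the final step from soundness to $\Con(\isig{n})$ is unaffected.
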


\begin{cor}
    \
    \begin{enumerate}
        \item $\pra$, $\isig{1}$, $\rca[0]$ and $\wkl[0]$ are equiconsistent.
              \label{eq_con_systems}
        \item $\wkl$ proves the consistency of the systems given in
              (\ref{eq_con_systems}).
        \item $\wkl$ is not $\Pi^0_1$ conservative over the systems given in
              (\ref{eq_con_systems}).
    \end{enumerate}
\end{cor}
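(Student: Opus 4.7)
The plan is to derive (1) from standard low-complexity conservation theorems, bootstrap (2) from the Fact together with those same conservation theorems now invoked as formalizable statements inside a sufficiently strong subtheory of $\wkl$, and then read off (3) as an immediate consequence of Gödel's second incompleteness theorem.

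For (1) I would assemble three ingredients already in the literature: Parsons' theorem that $\isig{1}$ is $\Pi^0_2$-conservative over $\pra$; the standard fact that the first-order part of $\rca_0$ is exactly $\isig{1}$, so $\rca_0$ is first-order conservative over $\isig{1}$; and the Friedman--Sieg result cited in \S\ref{sec:partial_hilbert} that $\wkl_0$ is $\Pi^0_2$-conservative (hence in particular $\Pi^0_1$-conservative) over $\pra$. Since $\Pi^0_1$-conservation transfers consistency---an inconsistency of the stronger theory would supply a false $\Pi^0_1$ witness in the weaker one---all four systems are equiconsistent.

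For (2), I observe that the full induction scheme of $\wkl$ includes all $\Sigma^0_2$ instances, so $\wkl \supseteq \isig{2}$, and hence by the Fact $\wkl \vdash \Con(\isig{1})$. The arithmetized versions of the conservation theorems in (1) are themselves provable in a weak metatheory---Sieg's derivation of the $\Pi^0_2$-conservativity of $\wkl_0$ over $\pra$ is a finitary proof transformation, and the other two are even more elementary---so $\isig{2}$ already proves the implications $\Con(\isig{1}) \to \Con(\pra)$, $\Con(\isig{1}) \to \Con(\rca_0)$, and $\Con(\pra) \to \Con(\wkl_0)$. Chaining these inside $\wkl$ yields $\Con(\pra)$, $\Con(\isig{1})$, $\Con(\rca_0)$, and $\Con(\wkl_0)$ simultaneously.

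For (3), note that $\Con(\pra)$ is a $\Pi^0_1$ sentence which by (2) is a theorem of $\wkl$, whereas Gödel's second incompleteness theorem (assuming $\pra$ is consistent) forbids $\pra \vdash \Con(\pra)$, so $\wkl$ already fails to be $\Pi^0_1$-conservative over $\pra$. The same $\Pi^0_1$ witness handles the other three systems: if any of $\isig{1}$, $\rca_0$, $\wkl_0$ proved $\Con(\pra)$ then by the $\Pi^0_1$-conservation results assembled in (1), $\pra$ itself would prove $\Con(\pra)$, again contradicting Gödel. The main obstacle I anticipate is simply keeping the formalization bookkeeping clean---verifying that the conservation arguments invoked in (2) really do go through inside $\isig{2}$---but since each of them has a low-complexity proof already in the literature, this is a routine check rather than a substantive difficulty.
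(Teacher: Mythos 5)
Your proposal is correct and follows essentially the same route as the paper: equiconsistency via the chain of conservativity results (Parsons for $\isig{1}$ over $\pra$, the first-order part of $\rca_0$, and conservativity of $\wkl_0$ --- you invoke the Friedman--Sieg $\Pi^0_2$-conservativity over $\pra$ directly where the paper routes through Harrington's $\Pi^1_1$-conservativity over $\rca_0$, which comes to the same thing), then $\wkl \supseteq \isig{2}$ together with Fact \ref{fact:isig_con} and the formalized consistency implications for part (2), and Gödel's second incompleteness theorem plus the $\Pi^0_1$ complexity of consistency statements for part (3). If anything, you are more explicit than the paper about why the conservativity implications such as $\Con(\pra) \to \Con(\wkl_0)$ are available inside $\isig{2}$, which the paper leaves implicit in its remark that any system proving the consistency of one of the listed systems proves the consistency of all of them.
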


\begin{proof}
    $\isig{1}$ is $\Pi^0_2$ conservative over $\pra$ \citep{Parsons1970}; the
    first order part of $\rca_0$ is $\isig{1}$ (that is, they prove the same
    sentences in the language $\mathrm{L}_1$ of first order arithmetic); and
    $\wkl_0$ is $\Pi^1_1$ conservative over $\rca_0$ (this is a result of Leo
    Harrington; a proof appears in \citet[§IX.2]{Simpson2009}). Consequently
    any $\Pi^0_2$ statement provable in $\wkl_0$ (or $\rca_0$ or $\isig{1}$) is
    also provable in $\pra$. Since the canonical consistency statements for
    $\pra$, $\isig{1}$ and $\wkl_0$ are $\Pi^0_1$, any system proving the
    consistency of one of these systems proves the consistency of all the
    others.
    
    By fact \ref{fact:isig_con}, $\isig{2}$ proves the consistency of $\isig{1}$
    and hence the consistency of all the systems listed in
    (\ref{eq_con_systems}). $\wkl$ extends $\isig{2}$ and thus proves all the
    theorems it does. Finally, by the complexity of consistency statements,
    $\wkl$ cannot be $\Pi^0_1$ conservative over any of the systems
    listed in (\ref{eq_con_systems}).
\end{proof}

The methods of infinitary mathematics are justified, according to Simpson's
finitistic reductionism, only to the extent that they are reducible to finitary
ones. This seems to rule out $\wkl$ as a partial realization of Hilbert's
program quite straightforwardly. But if computable entailment preserves
justification, then we are justified in accepting $\wkl$ if and only if we
accept $\wkl_0$, as they are computably equivalent. If this is not the case then
computable equivalence seems to have failed as a way to analyse the mathematical
resources required to derive theorems of ordinary mathematics, since it leads to
underdetermination: we are no longer certain, given some theorem $\varphi$,
whether it is acceptable to the finitistic reductionist if we know only that it
is computably entailed by $\wkl_0$. To resolve this underdetermination one could
prove that $\varphi$ follows from $\wkl_0$ using only resources acceptable to
the finitistic reductionist---but since these resources are simply the axioms of
a finitistically reducible system and the laws of classical logic, this amounts
to simply proving the result in $\wkl_0$, and we are no longer working in
Shore's framework, where all that is necessary to show that one principle
follows from another is to demonstrate that the former is true in every
$\omega$-model of the latter.

This being the case, we have at least one situation in which computational
reverse mathematics is not sufficient to carry out a task in reverse mathematics
of significant philosophical interest and importance. The computable entailment
relation does not always preserve the justificatory structure of foundational
theories, and hence Shore's framework thus cannot be used to conduct the kind of
foundational analysis articulated in the previous chapter---at least for one
important example, namely Simpson's finitistic reductionism.%
\footnote{%
This argument also shows that any modification of reverse mathematics to
strengthen the induction principle of the base theory to include even
$\Sigma^0_2$ induction renders it inappropriate for the foundational analysis of
finitistic reductionism. Friedman's switch from subsystems of $\z_2$ with full
induction in \citep{Friedman1975} to systems with restricted induction in
\citep{Friedman1976} is therefore a crucial one for the foundational analysis of
finitistic reductionism.
}

In fact, we can show more than this: that computable entailment does not
preserve justification for the two other foundational programs we examined
above: Weyl's predicativism and the program of predicative reductionism that
follows from the Feferman--Schütte analysis of predicative provability. Here we
rely on a fact that will be demonstrated in lemma \ref{lem:ce_lower_bound}: if
$\varphi$ is a $\Pi^1_1$ sentence in the language of second order arithmetic,
then $\varphi$ is true if and only if $\models_c \varphi$. We therefore have
that for any code $X$ for a recursive linear order $<_X$, $<_X$ is wellordered
(i.e. $X$ codes a recursive ordinal $\alpha$) if and only if
$\models_c \text{$<_X$ is wellordered}$. Consider the ordinals $\varepsilon_0$
and $\Gamma_0$, which are respectively the proof-theoretic ordinals of $\aca_0$
and $\atr_0$. From an external point of view validating that the recursive codes
for these ordinals really do code wellorderings, we have that
$\mathrm{WO}(\varepsilon_0)$ and $\mathrm{WO}(\Gamma_0)$. But then by the above
fact, $\models_c \mathrm{WO}(\varepsilon_0)$
and $\models_c \mathrm{WO}(\Gamma_0)$.
This means that $\aca_0 \equiv_c \aca_0 + \mathrm{WO}(\varepsilon_0)$
and $\atr_0 \equiv_c \atr_0 + \mathrm{WO}(\Gamma_0)$.

$\aca_0$ does not prove that $\varepsilon_0$ is wellordered, but one might
reasonably consider $\mathrm{WO}(\varepsilon_0)$ to be a predicative principle
nonetheless, since stronger predicative theories, justified on the
Feferman--Schütte analysis of predicativity, do prove it. However, on that
understanding of predicativity, it is not predicatively provable that $\Gamma_0$
is wellordered. There are therefore theories which are computably equivalent to
predicative and predicative reducible ones but which are not themselves either
predicative or predicatively reducible. As we have argued above for the case of
finitistic reductionism, this shows that computational reverse mathematics is
not an appropriate setting in which to carry out the foundational analysis of
predicativism and predicative reductionism.

One might reasonably wonder whether these latter examples are somehow artificial
and do not constitute substantial counterexamples to the preservation of
justification by the computable entailment relation. Since all true $\Pi^1_1$
statements are computably entailed, this question reduces to the question of
whether there are ordinary mathematical theorems that are $\Pi^1_1$ and not
justifiable on the basis of the foundational programs we are considering. The
answer to this is positive, and indeed there are ordinary mathematical theorems
equivalent to statements of the form we have just been considering, namely
$\mathrm{WO}(\alpha)$ for $\alpha < \omega_1^\mathrm{CK}$. \citet{Simpson1988}
showed that Hilbert's basis theorem is equivalent over $\rca_0$ to the
wellordering of $\omega^\omega$. This is the proof-theoretic ordinal of
$\wkl_0$, so $\mathrm{WO}(\omega^\omega)$ is computably entailed by $\wkl_0$ but
not finitistically reducible, since over a weak base theory it implies the
consistency of $\wkl_0$ and thus that of $\pra$. A much stronger example is
Kruskal's theorem, a famous result in graph theory that is equivalent over
$\aca_0$ to the wellordering of the small Veblen ordinal
$\vartheta\Omega^\omega$ \citep[p.~62]{RatWei1993}. This is a recursive ordinal
greater than $\Gamma_0$, and thus $\mathrm{WO}(\vartheta\Omega^\omega)$ is
computably entailed by all predicative and predicatively reducible subsystems of
second order arithmetic, but not predicatively provable.

\section{The complexity of computable entailment}
\label{sec:complexity}

We now turn to a different but related issue with the computable entailment
relation: its completes-theor\-etic complexity. As we know from Church and
Turing's negative answer to the Entscheidungsproblem, the classical provability
relation is uncomputable. Indeed, the set of provable consequences of a theory
like Peano arithmetic is a quintessential example of a recursively enumerable
set that is not recursive. Consequently, while there is no general method for
determining whether or not a sentence $\varphi$ in the language of arithmetic
is provable in $\rca_0$, there is a Turing machine which enumerates the provable
consequences of $\rca_0$, amongst which are the equivalences of classical
reverse mathematics.

Semantic relations such as truth tend to be far more complex than syntactic
relations such as provability, since they are---usually
ineliminably---infinitary in nature. The completeness theorem for classical
first order logic gives us an important counterexample: since
$T \models \varphi \; \Leftrightarrow \; T \vdash \varphi$
for theories $T$ and sentences $\varphi$, we can enumerate the model-theor\-etic
consequences of a theory by enumerating its provable consequences, reducing a
complex semantic relation to a finitary one.
The same does not hold for computable entailment. Not only is it not recursive,
but it is not even arithmetical. As a prelude to demonstrating this, we give a
revised definition of computable entailment, generalised to accommodate
parameters. For the rest of this paper we use the symbol $\Nat$ to refer to the
internal natural numbers of subsystems of second order arithmetic, and reserve
the symbol $\omega$ for the external natural numbers of the metatheory.

\begin{dfn}
    \label{dfn:x_computable_entailment}
    For any set $X \subseteq \omega$, and sentence $\varphi$ in the language
    $\lt$ expanded with a constant symbol for $X$, we say that
    \emph{$\varphi$ is $X$-computably entailed},
    in symbols $\models_c^X \varphi$,
    iff for all Turing ideals $M$ such that $X \in M$,
    $M \models \varphi$.
\end{dfn}

At first glance this may appear less general than the earlier definition, but by
the definition of the satisfaction relation, $(\varphi \models_c^X \psi)$ iff
$\models_c^X (\varphi \rightarrow \psi)$, and the new definition is simpler to work
with in the current context.
Fixing a recursive, bijective Gödel coding of sentences of second order
arithmetic, we represent the computable entailment relation by the set of Gödel
codes for sentences which are computably entailed.
For any $X \subseteq \omega$, let
\begin{equation}
    C(X) = \Set{ \gcode{\varphi} | \models_c^X \varphi }
\end{equation}
where $\varphi$ is an $\lt$-sentence which may contain a constant $\overline{X}$
denoting $X$.
The parameter-free version of $C(X)$ we denote simply $C$.
Observing that the definition of computable entailment quantifies over $\omega$%
-models, we can see that $C$ contains all the sentences of True Arithmetic, the
first order theory of the natural numbers. True Arithmetic is not arithmetically
definable, as this would contradict Tarski's theorem. So computable entailment
cannot be arithmetical either.

A stronger lower bound for the complexity of computable entailment can be found
by noting that arithmetical properties of reals are absolute to all $\omega$-%
models, and thus that all $\Pi^1_1$ sets of natural numbers are $1$-reducible to
$C$. We can thus precisely characterise its complexity as $\Pi^1_1$ complete, by
showing that $C$ can be captured by a $\Pi^1_1$ definition. This theorem is
essentially a classical one due to \citet*[§3.4, pp.~386--7]{GrzMosRyl1958}.
Their result was proved for the second order functional calculus with the
$\omega$-rule, which they refer to as $A_\omega$. We can understand this in the
terminology of the present work as the following result: the set of Gödel
numbers of $\lt$-sentences true in every $\omega$-model of second order
arithmetic $\z_2$ is a $\Pi^1_1$ complete set.
The proof presented below is due to \citet{mummert2012}, who proves it for
$\omega$-models of $\rca_0$ rather than full $\z_2$. Relativizing computable
entailment to a set parameter $X \subseteq \omega$ we have the following.

\begin{thm}
    \label{thm:ce_complexity}
    For any set parameter $X \subseteq \omega$, the $X$-computable entailment
    relation $C(X)$ is $\Pi^1_1(X)$ complete.
\end{thm}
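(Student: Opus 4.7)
The plan is to establish the complexity in two stages: first show that $C(X)$ admits a $\Pi^1_1(X)$ definition, and then show that every $\Pi^1_1(X)$ set many-one reduces to $C(X)$.

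For the upper bound, I would code each countable Turing ideal $M$ containing $X$ by a single real $Y \subseteq \omega$ whose $n$-th column $(Y)_n$ is intended as the $n$-th member of $M$. The requirements that $Y$ codes a Turing ideal containing $X$---closure under Turing reducibility, closure under recursive joins, and $X \in M$---are all arithmetical in $Y$ and $X$. Moreover, since every $\omega$-model has its first-order part fixed as the standard $\omega$, Tarski's inductive truth definition furnishes a satisfaction predicate ``$Y \models \varphi$'' that is arithmetical in $Y$, with the induction running over the external syntactic structure of $\varphi$ and the second-order quantifiers ranging over the columns of $Y$. Hence $\models_c^X \varphi$ is expressible as $\forall Y\, \bigl( (Y \text{ codes a Turing ideal containing } X) \rightarrow (Y \models \varphi) \bigr)$, which is $\Pi^1_1(X)$.

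For the lower bound, the key fact---essentially lemma \ref{lem:ce_lower_bound}, already invoked in \S\ref{sec:justification}---is that for any $\Pi^1_1(X)$ sentence $\psi$, we have $\psi \leftrightarrow (\models_c^X \psi)$. The forward direction follows from absoluteness: if $\psi \equiv \forall Z\, \theta(Z, X)$ with $\theta$ arithmetical, and $\psi$ holds, then for any Turing ideal $M$ containing $X$ and any $Z \in M$, the truth value of $\theta(Z, X)$ in $M$ agrees with its actual truth value (arithmetical sentences contain no second-order quantifiers, and the first-order part is standard), so $M \models \psi$. For the converse, if $\psi$ fails, fix a witness $Z$ with $\neg\theta(Z, X)$; then the principal Turing ideal $M_0 = \Set{ W | W \Tleq X \oplus Z }$ contains both $X$ and $Z$, so $M_0 \models \neg\psi$. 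Combining these ingredients, given any $\Pi^1_1(X)$ set $A$, I would use the Kleene normal form to represent $A$ as $\Set{ n | <_{g(n)}^X \text{ is a well-ordering} }$ for some recursive function $g$ producing indices of linear orders computable in $X$; then well-foundedness is a $\Pi^1_1(X)$ statement, and the key fact yields $n \in A$ iff that statement is $X$-computably entailed, which is the sought recursive reduction.

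The main technical care needed lies in the upper bound: checking that for $\lt$-sentences $\varphi$ of unbounded second-order quantifier complexity, the satisfaction relation really stays arithmetical in $Y$, so that the outer universal quantifier over codes of Turing ideals is the only essential second-order quantifier in the definition. This turns on the standardness of the first-order part of every $\omega$-model, which lets Tarski's inductive clauses be applied directly without additional set-theoretic overhead; consequently the upper bound does not inadvertently climb above $\Pi^1_1(X)$.
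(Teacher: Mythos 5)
Your lower bound is essentially the paper's argument: the key fact that a $\Pi^1_1(X)$ sentence is true iff $X$-computably entailed is proved exactly as in lemma \ref{lem:ce_lower_bound} (absoluteness of arithmetical formulas in one direction, a Turing ideal containing the counterexample in the other), and the detour through Kleene normal form and well-orderings is harmless, though the direct map $n \mapsto \gcode{\varphi(\overline{n}, \overline{X})}$ is simpler and immediately gives the injectivity needed for $1$-reducibility.

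The upper bound, however, contains a genuine gap, and it is precisely at the point you flag as ``the main technical care needed.'' You claim that the satisfaction relation ``$Y \models \varphi$'' is arithmetical in $Y$ \emph{uniformly} in the Gödel code of $\varphi$, because the first-order part is standard and Tarski's clauses can be applied along the external syntactic structure of $\varphi$. That relativization argument only produces, for each \emph{externally fixed} sentence $\varphi$, an arithmetical formula $\varphi^Y$ whose complexity grows with that of $\varphi$; it does not produce a single arithmetical predicate $\mathrm{Sat}(Y, \gcode{\varphi})$ with $\gcode{\varphi}$ as a free variable, and no such predicate can exist: already truth in the standard first-order part is non-arithmetical by Tarski's undefinability theorem, so a uniform satisfaction predicate for the coded $\omega$-model cannot be arithmetical in $Y$ regardless of how standard the first-order part is. Since $C(X)$ is a set of Gödel codes, the $\Pi^1_1(X)$ definition must be uniform, so your argument as stated does not establish the upper bound. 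The repair is the one the paper uses: express ``$M \models \varphi$'' as the existence of a satisfaction (valuation) function $f$ for $M$ with $f(\gcode{\varphi}) = 1$, which is $\Sigma^1_1$ in the code $W$ of $M$; since such an $f$ is provably unique, this is equivalent to the $\Pi^1_1$ statement that every such $f$ gives value $1$, and substituting that form inside the universal quantifier over codes $W$ keeps the whole definition $\Pi^1_1(X)$. A second, smaller omission: $C(X)$ is defined by quantification over \emph{all} Turing ideals containing $X$, whereas your definition quantifies only over countable coded ones, so you still need the downward Löwenheim--Skolem step (the paper's lemma \ref{lem:ti_countable}) to see that a sentence false in some Turing ideal containing $X$ is already false in a countable one.
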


We shall need some standard definitions from computability theory. For more
background the reader should consult a reference work such as \citet{rogers1967}
or \citet{soare1987}.

\begin{dfn}
    For sets $X, Y \subseteq \omega$, \emph{$X$ is many-one reducible to $Y$},
    $X \leq_m Y$, just in case there is a total recursive function $f$
    such that for all $m \in \omega$,
    \begin{equation}
        m \in X \Leftrightarrow f(m) \in Y.
    \end{equation}
    If $f$ is injective then \emph{$X$ is $1$-reducible to $Y$}, $X \leq_1 Y$,
    and if $f$ is a bijection then $X$ and $Y$ are \emph{$1$-equivalent}.
\end{dfn}

\begin{dfn}
    Let $\mathcal{X} \subseteq \mathcal{P}(\omega)$.
    A set $X \subseteq \omega$ is \emph{complete for $\mathcal{X}$} iff
    $X \in \mathcal{X}$ and $Y \leq_1 X$ for every $Y \in \mathcal{X}$.
\end{dfn}

\begin{lem}
    \label{lem:ce_lower_bound}
    For any set parameter $X \subseteq \omega$, every $\Pi^1_1(X)$ set $A$ is
    $1$-reducible to $C(X)$.
\end{lem}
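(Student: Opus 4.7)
The plan is to exploit the fact that $\Pi^1_1(X)$ sentences are absolute between the true universe and any Turing ideal containing $X$: if $\varphi$ is any $\Pi^1_1(X)$ sentence, then $\varphi$ is (really) true iff $\models_c^X \varphi$. Granting this, to show $A \leq_1 C(X)$ for a $\Pi^1_1(X)$ set $A$ it suffices to produce a recursive injection $n \mapsto \gcode{\varphi_n}$ with $\varphi_n$ a $\Pi^1_1(X)$ sentence in the language $\lt$ expanded by a constant $\overline{X}$ for $X$, such that $n \in A$ iff $\varphi_n$ is true.

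For the first half I would appeal to the standard representation of $\Pi^1_1(X)$ sets. Uniformly in $n$ there is an $X$-recursive linear order $\prec_n$ on $\Nat$ such that $n \in A$ iff $\prec_n$ is a wellordering. The statement ``$\prec_n$ is a wellordering'' can then be written as a single $\Pi^1_1(X)$ sentence $\varphi_n$ of the form $\forall Y \, \psi_n(\overline{X}, Y)$, with $\psi_n$ arithmetical and with an $e$-index for $\prec_n$ built in as a standard numeral. The Gödel number of $\varphi_n$ is computed primitively recursively from $n$; injectivity of the map is arranged by padding $\varphi_n$ with an inert conjunct mentioning the numeral $\overline{n}$ (for instance $\overline{n} = \overline{n}$), ensuring distinct $n$ yield syntactically distinct sentences and hence distinct Gödel codes.

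The crux is the absoluteness claim for $\varphi \equiv \forall Y\, \psi(\overline{X}, Y)$ with $\psi$ arithmetical. If $\varphi$ is true in $V$, then for any Turing ideal $\mathcal{C}$ with $X \in \mathcal{C}$ and any $Y \in \mathcal{C}$, the arithmetical formula $\psi(\overline{X}, Y)$ is absolute between $\mathcal{C}$ (an $\omega$-model interpreting arithmetic standardly) and $V$, so $\mathcal{C} \models \psi(\overline{X}, Y)$; hence $\mathcal{C} \models \varphi$ and therefore $\models_c^X \varphi$. Conversely, if $\varphi$ is false, pick a witness $Y_0 \subseteq \omega$ with $\neg\psi(X, Y_0)$; the principal Turing ideal $\mathcal{C}_0 = \Set{ Z | Z \Tleq X \oplus Y_0 }$ contains both $X$ and $Y_0$, so by the same absoluteness $\mathcal{C}_0 \not\models \varphi$, contradicting $\models_c^X \varphi$.

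Putting these together yields $n \in A$ iff $\varphi_n$ is true iff $\gcode{\varphi_n} \in C(X)$, establishing $A \leq_1 C(X)$ via the recursive injection $n \mapsto \gcode{\varphi_n}$. I expect no serious obstacle in any single step; the only points requiring care are, first, that the $X$-recursive object $\prec_n$ is genuinely expressed inside $\varphi_n$ using the designated constant $\overline{X}$ (rather than being smuggled in as an external parameter), so that the relativization built into the definition of $C(X)$ actually fires; and second, that the coding producing $\varphi_n$ from $n$ is verified to be injective, not merely total recursive, which the numeral-padding trick handles uniformly.
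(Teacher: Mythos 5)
Your proof is correct and rests on the same key idea as the paper's: the absoluteness of $\Pi^1_1(X)$ sentences between the full model and Turing ideals containing $X$ (your use of the principal ideal generated by $X \oplus Y_0$ in the converse direction is a harmless variant of the paper's appeal to the full model itself being such an ideal), followed by a recursive injection into Gödel codes. The only difference is your detour through the normal form ``$n \in A$ iff the $X$-recursive order $\prec_n$ is a wellordering,'' which is unnecessary: the paper simply substitutes the numeral $\overline{n}$ into the given $\Pi^1_1$ formula defining $A$, obtaining injectivity directly from the distinctness of numerals without any padding.
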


\begin{proof}
    Let $\varphi(m_1, X_1)$ be a $\Pi^1_1$ formula.
    We refer to $(\omega, \mathcal{P}(\omega))$ as the \emph{full model}.
    
    \emph{Claim:} For any $n \in \omega$ and $X \subseteq \omega$,
    $\varphi(n, X)$ is true in the full model iff it is true in all Turing
    ideals containing $X$.
    
    ($\Leftarrow$) The full model is a Turing ideal containing $X$, so if
    $\varphi(n, X)$ is false in the full model then it is false in that ideal.
    
    ($\Rightarrow$) Assume without loss of generality that $\varphi(n, X)
    \equiv \forall{Y}\psi(n, X, Y)$ where $\psi$ is arithmetical.
    Suppose there is a Turing ideal $\mathcal{C}$ containing $X$ such that
    $\mathcal{C} \not\models \varphi(X)$. Then there is some counterexample
    $B \in \mathcal{C}$ such that $\mathcal{C} \not\models \psi(X, B)$. Since
    the interpretation of the first order quantifiers and nonlogical symbols are
    the same in all $\omega$-models, such a $B$ will remain a counterexample in
    the full model.
    
    This completes the proof of the claim.
    
    Given $\varphi(m_1, X_1)$ as above, let
    $A = \Set{ n \in \omega | \varphi(n, X)}$.
    Define the function $f_A : \omega \rightarrow \omega$ as
    $f_A(n) = \gcode{\varphi(\overline{n}, \overline{X})}$. This function is
    recursive and injective, since if $a \neq b$ then
    $\gcode{\varphi(\overline{a}, \overline{X})}
        \neq
    \gcode{\varphi(\overline{b}, \overline{X})}$
    by the properties of the Gödel coding.
    Finally by the claim above and the fact that $\varphi(m_1, X_1)$ is
    $\Pi^1_1$,
    $n \in A
        \leftrightarrow
    \varphi(n, X)
        \leftrightarrow
    \gcode{\varphi(\overline{n}, \overline{X})} = f_A(n) \in C(X)$.
\end{proof}

Having shown that $C$ is $\Pi^1_1$-hard, i.e. that all sets $A \in \Pi^1_1$ are
$1$-reducible to it, we shall show that $C$ is itself $\Pi^1_1$ and thus is
$\Pi^1_1$ complete. In doing so we shall lean on the following definition which
shows how a set can code a countable Turing ideal.
A countable coded $\omega$-model is a set $W$ which codes countable sequence of
sets $\langle (W)_n \mid n \in \mathbb{N} \rangle$ where
$(W)_n = \Set{ i | (i, n) \in W }$. For a full definition of countable
coded $\omega$-models see \citet[§VII.2]{Simpson2009}.

\begin{dfn}
    \label{dfn:ctble_ti}
    Suppose $W \subseteq \Nat$ is a set coding the countable model $M$ and
    $X \subseteq \Nat$.
    \emph{$W$ codes a countable Turing ideal containing $X$} iff
    
    \begin{enumerate}
        \item[(i)]
            For every $m, n$, there exists a $k$
            such that $(W)_k = (W)_m \oplus (W)_n$;
            \label{ctble_ti_join}
        \item[(ii)]
            For every $m$, if $Y \Tleq (W)_m$ then there exists a $k$
            such that $(W)_k = Y$;
            \label{ctble_ti_reduc}
        \item[(iii)]
            There exists some $k$ such that $(W)_k = X$.
            \label{ctble_ti_param}
    \end{enumerate}
\end{dfn}

\begin{lem}
    \label{lem:ti_arithmetical}
    Let $X, W \subseteq \mathbb{N}$.
    The predicate ``$W$ codes a countable Turing ideal containing $X$'' is
    arithmetical.
\end{lem}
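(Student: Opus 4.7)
The plan is to show that each of the three clauses (i)--(iii) in Definition~\ref{dfn:ctble_ti} is individually arithmetical in the parameters $W$ and $X$, so that their conjunction is arithmetical.

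Clauses (i) and (iii) are routine. Unfolding, ``$(W)_k = (W)_m \oplus (W)_n$'' reads $\forall i\bigl((i,k) \in W \leftrightarrow i \in (W)_m \oplus (W)_n\bigr)$, which is $\Pi^0_1$ in $W$ (the right-hand membership is decided by even/odd parity and membership in $W$). Similarly ``$(W)_k = X$'' is $\Pi^0_1$ in $W$ and $X$. Prefixing the number quantifiers $\forall m\forall n\exists k$ and $\exists k$ then gives arithmetical statements.

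The only real subtlety lies in clause (ii), which as written contains an implicit second-order quantifier: ``for every $Y \subseteq \Nat$ with $Y \Tleq (W)_m$ some $(W)_k$ equals $Y$''. The standard device for eliminating it is to reparameterise Turing reducibility by an index: $Y \Tleq (W)_m$ holds iff there is some $e \in \omega$ such that the Turing functional $\Phi_e^{(W)_m}$ is total and its graph defines the characteristic function of $Y$. Clause (ii) is therefore equivalent to
\begin{equation*}
    \forall m\,\forall e\,\bigl[\,\Phi_e^{(W)_m}\text{ is total}
    \;\rightarrow\;
    \exists k\,\forall i\bigl(\,(i,k) \in W \leftrightarrow \Phi_e^{(W)_m}(i) = 1\,\bigr)\bigr].
\end{equation*}
Via the Kleene $T$-predicate relativised to $(W)_m$, ``$\Phi_e^{(W)_m}$ is total'' expands to $\forall i\,\exists s\,T^{(W)_m}(e,i,s)$, which is $\Pi^0_2$ in $W$; and the consequent likewise unfolds to an arithmetical formula in $W$. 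Hence clause (ii) is arithmetical.

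I do not anticipate any substantive obstacle: the whole lemma is a bookkeeping exercise, and the only trick worth flagging is the collapse of the universal set quantifier in clause (ii) to a number quantifier over reduction indices. Taking the conjunction of the three arithmetical conditions yields an arithmetical defining formula for ``$W$ codes a countable Turing ideal containing $X$'', as required.
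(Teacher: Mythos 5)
Your proof is correct and follows essentially the same route as the paper: clauses (i) and (iii) are handled by routine arithmetical formulas, and the set quantifier in clause (ii) is eliminated by quantifying over numerical indices of reductions relative to $(W)_m$. The only cosmetic difference is that you code Turing reducibility via total Turing functionals and the relativised Kleene $T$-predicate, whereas the paper uses a universal lightface $\Pi^0_1$ formula (Simpson's definition VII.1.3) expressing $\Delta^0_1$-definability in the oracle---the same device in a different standard notation.
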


\begin{proof}
    Throughout we use the countable coded $\omega$-model $W$ as a parameter. The
    following formula is an analogue of condition (\ref{ctble_ti_join}) of
    definition \ref{dfn:ctble_ti}.
    \begin{equation}
        \begin{aligned}
            \forall{m} \forall{n}
            \exists{k}
            \forall{x} \forall{y} [ \, &
                x \in (W)_m \wedge y \in (W)_n \\
                \leftrightarrow \; &
                2x \in (W)_k \wedge 2y + 1 \in (W)_k
            \, ].
        \end{aligned}
    \end{equation}
    For (\ref{ctble_ti_reduc}), let $\pi(e, n, Y)$ be a universal lightface
    $\Pi^0_1$ formula with the given free variables. The existence of such
    formulae is provable in $\rca[0]$; a definition is provided in
    \citet[definition VII.1.3, p.~244]{Simpson2009}. They play the role of
    universal Turing machines.
    \begin{equation}
        \begin{aligned}
            \forall{m}\forall{e_0}\forall{e_1} [ \, &
                \forall{n} (
                    \pi(e_0, n, (W)_m) \leftrightarrow \neg\pi(e_1, n, (W)_m)
                ) \\
                \rightarrow \; &
                \exists{k}\forall{n} (
                    n \in (W)_k \leftrightarrow \pi(e_0, n, (W)_m)
                )
            \, ].
        \end{aligned}
    \end{equation}
    Finally we add condition (\ref{ctble_ti_param}) that $X$ is an element of
    the Turing ideal coded by $W$,
    \begin{equation}
        \exists{k}\forall{n} (n \in X \leftrightarrow n \in (W)_k).
    \end{equation}
    One can (tediously) verify that these conditions hold of $W$ if and only if
    the $\omega$-model coded by $W$ is a Turing ideal containing $X$.
\end{proof}

\begin{lem}
    \label{lem:ti_countable}
    For any set parameter $X \subseteq \omega$, if an $\lt(X)$-sentence
    $\varphi$ is false in any Turing ideal containing $X$, then it is false in a
    countable Turing ideal containing $X$.
\end{lem}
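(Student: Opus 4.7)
The plan is to run a downward Löwenheim--Skolem argument inside the class of Turing ideals. Fix a Turing ideal $\mathcal{S} \subseteq \mathcal{P}(\omega)$ with $X \in \mathcal{S}$ and $(\omega,\mathcal{S}) \not\models \varphi$. Since $\varphi$ is false in $\mathcal{S}$, its negation is true there, and it is this truth that must be transmitted to a countable subideal $\mathcal{C} \subseteq \mathcal{S}$ still containing $X$.

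First I would put $\neg\varphi$ into negation normal form and enumerate its finitely many subformulas, singling out those of the form $\exists Y \, \psi(Y, \vec V, \vec k)$ (with $X$ allowed as a parameter via its constant symbol). For each such subformula I fix a Skolem-style choice function that, given $\vec V \in \mathcal{S}$ and $\vec k \in \omega$, picks a witness in $\mathcal{S}$ whenever one exists. Next I would build a chain $\mathcal{C}_0 \subseteq \mathcal{C}_1 \subseteq \cdots$ of subsets of $\mathcal{S}$, starting with $\mathcal{C}_0 = \{X\}$ and, at each stage, adjoining: (i) every $Z \in \mathcal{S}$ with $Z \Tleq Y$ for some $Y \in \mathcal{C}_n$; (ii) every join $Y \oplus Z$ for $Y, Z \in \mathcal{C}_n$; and (iii) for each existential subformula identified above and each tuple of parameters drawn from $\mathcal{C}_n$ and $\omega$, a Skolem witness (if one exists in $\mathcal{S}$). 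Each of these operations adds only countably many sets at each stage -- the Turing-downward cone of a single set is countable, joins are countable, and the existential-subformula/parameter tuples form a countable family -- so each $\mathcal{C}_n$, and hence $\mathcal{C} := \bigcup_n \mathcal{C}_n$, is countable.

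It is then routine to verify that $\mathcal{C}$ is a Turing ideal: closure under $\Tleq$ and $\oplus$ follows by unwinding the chain construction, and $X \in \mathcal{C}_0 \subseteq \mathcal{C}$. The central remaining step is a Tarski--Vaught style induction on the subformulas of the negation normal form of $\neg\varphi$, showing that for parameters drawn from $\mathcal{C}$ and $\omega$, truth in $(\omega,\mathcal{C})$ agrees with truth in $(\omega,\mathcal{S})$. Atomic and negated atomic formulas agree because the first-order parts coincide and membership is absolute downward; Boolean connectives and number quantifiers reduce to the induction hypothesis; the $\exists Y$ step is handled by condition (iii), and the $\forall Y$ step by the inclusion $\mathcal{C} \subseteq \mathcal{S}$ together with the induction hypothesis. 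Applied to $\neg\varphi$ itself, this yields $(\omega,\mathcal{C}) \models \neg\varphi$, as required.

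The main obstacle I anticipate is the bookkeeping that coordinates the three closure operations simultaneously, so that cardinality is preserved at every stage while ensuring that, in the limit, no existential subformula of $\neg\varphi$ with parameters in $\mathcal{C}$ has all of its witnesses escape from $\mathcal{C}$. Interleaving the three operations at each stage, together with the countability bounds noted above, resolves this, and it is the reason the argument does not collapse to a naive closure that would either blow up to size continuum or fail to reflect enough existential witnesses.
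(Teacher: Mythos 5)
Your argument is correct, and it is in essence the same argument as the paper's, just unpacked: the paper simply applies the downward L\"owenheim--Skolem theorem for $\omega$-models to get a countable elementary submodel $M' \ni X$, and then reads off both the Turing-ideal property (it is expressible by an $\lt(X)$-sentence true in $M$) and the failure of $\varphi$ from elementarity, whereas you rebuild the relevant instance of L\"owenheim--Skolem by hand via a Skolem hull, and secure the ideal property by explicit closure under $\Tleq$ and $\oplus$ rather than by elementarity. What your version buys is self-containment and economy: you only need witnesses for the existential set quantifiers occurring in the negation normal form of $\neg\varphi$, so only a fragment of elementarity is used; what it costs is the bookkeeping you describe, which the citation of L\"owenheim--Skolem hides. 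One small overstatement: your Tarski--Vaught induction, as set up, does not give full two-way ``agreement'' between $(\omega,\mathcal{C})$ and $(\omega,\mathcal{S})$ --- in the $\forall Y$ case the inclusion $\mathcal{C} \subseteq \mathcal{S}$ plus the induction hypothesis only yields preservation of truth from $\mathcal{S}$ down to $\mathcal{C}$, and the converse would need counterexample witnesses. But since you work in negation normal form, this one-directional preservation of the subformulas of $\neg\varphi$ is all the lemma requires, so the proof goes through; it would be cleaner to state the induction claim in that one-directional form (and to note that picking the Skolem witnesses uses a countable amount of choice, just as the paper's appeal to L\"owenheim--Skolem does).
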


\begin{proof}
    Let $M$ be a Turing ideal containing $X$, and assume that
    $M \models \neg\varphi$. By the downwards Löwenheim--Skolem theorem, $M$
    has a countable $\omega$-submodel $M' \subseteq_\omega M$ such that
    $X \in M'$. $M'$ is a Turing ideal, as this property is definable by an
    $\lt(X)$ sentence which is true in $M$, and thus in $M'$ by elementarity.
    Finally, $\varphi$ is false in $M'$, again by elementarity.
\end{proof}

\emph{Proof of theorem \ref{thm:ce_complexity}.}
Fix a set parameter $X$. By lemma \ref{lem:ce_lower_bound}, $C(X)$ $1$-reduces
every $\Pi^1_1(X)$ set. It only remains to show that $C(X)$ is itself a
$\Pi^1_1(X)$ set.

Let $C^\dagger(X)$ be the set of Gödel codes of $\lt$-sentences $\varphi$ such
that every countable Turing ideal containing $X$ satisfies $\varphi$. Lemma
\ref{lem:ti_countable} shows that any sentence $\varphi$ of second order
arithmetic is satisfied by every Turing ideal containing $X$ iff it is satisfied
by every countable Turing ideal containing $X$. So
$\gcode{\varphi} \in C(X) \Leftrightarrow \gcode{\varphi} \in C^\dagger(X)$.
Thus by proving that $C^\dagger(X)$ is a $\Pi^1_1(X)$ set, we show that $C(X)$
is also $\Pi^1_1(X)$.

The relation $\gcode{\varphi} \in C^\dagger(X)$ can be defined in second order
arithmetic as:
\begin{equation}
    (\forall{\text{ countable Turing ideals $M$}})
        (X \in M \rightarrow M \models \varphi)
\end{equation}
By lemma \ref{lem:ti_arithmetical}, the predicate ``$W$ codes a countable Turing
ideal $M$'' is arithmetical. $M \models \varphi$ means ``There exists a
satisfaction function $f$ for $M$ such that $f(\gcode{\varphi}) = 1$.'' Although
this is $\Sigma^1_1$, every such $f$ is provably unique, and thus
$M \models \varphi$ is equivalent to a $\Pi^1_1$ formula.
\hfill $\square$

\vspace{10pt}

Computable entailment thus transcends arithmetical truth, being recursively
isomorphic to the $\Pi^1_1$ theory of the natural numbers, and also to
membership in Kleene's $\mathcal{O}$, the set of notations for recursive
ordinals. Nevertheless its complexity is towards the lower end of the logics
considered by \citet{vaananen2001} and \citet{Koellner2010}, being for instance
far less complex than the full second-order consequence relation. But as we
shall soon see, such complexity is incompatible with the requirements of
foundational analysis.

The Entscheidungsproblem was considered by Hilbert and others to be of such
importance because a positive solution would have meant we could obtain, by
finite means, knowledge of the provability or unprovability of all mathematical
statements. The computational intractability of the classical provability
relation constitutes an epistemic difficulty for mathematics. From this
perspective, we should be troubled by an entailment relation such as Shore's
with a far greater degree of uncomputability.

It's well known that truth definitions are not simple: Kripke's fixed-point
construction of a truth predicate over the natural numbers is also
$\Pi^1_1$ complete \citep{kripke1975}.
Provability, at least for classical first-order logic, is comparatively
uncomplicated. If $\rca_0 \models \varphi$ then we can produce a finitary proof
witness by an exhaustive search. We have no such assurance when
$\models_c \varphi$: computable entailment does not satisfy Gödel's completeness
theorem, so we are unable to reduce this complex semantic relation to the more
finitistically acceptable provability relation.

$\omega$-logic does have a completeness theorem of sorts, namely the $\omega$-%
com\-plete\-ness theorem of Henkin and Orey, as mentioned in
§\ref{sec:shore_intro}. By this theorem, restricting to $\omega$-models is
equivalent to closing one's consequence set under the $\omega$-rule. This is
typically formalised in terms of an infinitary proof calculus, where proofs are
well-founded trees which branch infinitely on uses of the $\omega$-rule.
However, this completeness theorem does not induce a reduction in the complexity
of the computable entailment relation: computable entailment is irredeemably
infinitary.
Computable entailment is also impredicative. Shore's definition quantifies over
all Turing ideals, and while theorem \ref{thm:ce_complexity} shows that a
definition quantifying only over countable Turing ideals is in fact equivalent
to Shore's, computable entailment is still $\Pi^1_1$ complete, and thus an
archetypal impredicative relation.

While determining the complexity of the computable entailment relation in a
traditional, computability-theoretic way as we have above is a useful
classification exercise, it comes with some disadvantages. Principally, it does
not make clear what proof-theoretic resources are required in order to prove the
result. This means that it is unclear whether the result is epistemically
accessible to the convinced $\mathcal{F}$-theorist, where $\mathcal{F}$ is a
given foundational approach such as those studied in §\ref{sec:partial_hilbert}
and §\ref{sec:predicativism}. From an external viewpoint we can see that the
computable entailment relation is definable in the language of second order
arithmetic, by quantifying over all countable Turing ideals. We therefore turn
to the resources of reverse mathematics and show that an analogue of theorem
\ref{thm:ce_complexity} can be proved within a predicative subsystem of second
order arithmetic.

To do so we must select the correct base theory, and then formulate the
principle that truth sets for the $X$-computable entailment relations exist with
some care. The first barrier is that $\rca_0$ does not prove that codes for
countable Turing ideals exist. Nor, given a countable coded Turing ideal $M$,
does it prove that the satisfaction function $f$ for $M$ exists. We therefore
work in the stronger theory $\aca_0$. However, $\aca_0$ does not prove that the
full satisfaction function for any countable coded Turing ideal (considered as
an $\omega$-model) exists, since such a satisfaction function is essentially a
truth predicate for the first-order language of arithmetic, and thus not
arithmetically definable. We therefore formulate the definition of the truth set
for $X$-computable entailment $C(X)$ in a slightly modified form, using not full
satisfaction functions but valuation functions for single sentences.%
\footnote{%
This definition is extensionally equivalent to the previous one using full
satisfaction functions, as can be seen from the viewpoint of proof-theoretically
stronger but still predicatively reducible theories such as $\aca_0^+$, which
prove that satisfaction functions for countable coded $\omega$-models exist.
\citet{Dor2012} explains many of the subtleties involved.
}
For details of the notion of a valuation function see definition VII.2.1 of
\citet{Simpson2009}.

\begin{dfn}
    \label{dfn:x_computable_entailment_aca0}
    The following definition is made in $\aca_0$.
    Let $X \subseteq \Nat$ be any set and let $\gcode{\varphi}$ be a Gödel code
    for a sentence $\varphi$ in the language of second order arithmetic $\lt$
    extended with a constant symbol for $X$.
    We say that $\varphi$ is \emph{$X$-computably entailed},
    $\gcode{\varphi} \in C(X)$,
    if for every code $W$ for a countable Turing ideal $M$ such that $X \in M$,
    and for every valuation function
    $f : \mathrm{Sub}_M(\varphi) \rightarrow \Set{ 0, 1 }$,
    we have that $f(\gcode{\varphi}) = 1$.
\end{dfn}

\begin{lem}
    \label{lem:ce_complexity_aca0}
    Suppose $\varphi(m, X)$ is a $\Pi^1_1$ formula with exactly the displayed
    free variables.
    Then the following is provable in $\aca_0$.
    For all $X \subseteq \Nat$, if $C(X)$ exists, then
    $Y = \Set{ m | \varphi(m, X) }$ exists and $Y \Tleq C(X)$.
\end{lem}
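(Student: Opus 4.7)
The plan is to internalize, within $\aca_0$, the $1$-reduction used in lemma \ref{lem:ce_lower_bound}. I will write $\varphi(n, X) \equiv \forall Z\, \psi(n, X, Z)$ with $\psi$ arithmetical, and use the primitive recursive function $f_Y(n) = \gcode{\varphi(\overline n, \overline X)}$, which is available in $\aca_0$. The core task is to prove in $\aca_0$ the equivalence
\begin{equation*}
    \varphi(n, X) \; \leftrightarrow \; f_Y(n) \in C(X).
\end{equation*}
Granting this equivalence, $Y = \{ n : \varphi(n, X) \}$ becomes $\Delta^0_0$-definable from the parameter $C(X)$, so $Y$ exists by recursive comprehension and $f_Y$ simultaneously witnesses the Turing reduction $Y \Tleq C(X)$.

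For the forward direction, I would suppose $\varphi(n, X)$ holds, fix a code $W$ for a countable Turing ideal $M$ with $X \in M$, and let $f$ be an arbitrary valuation function on $M$ for $\varphi(\overline n, \overline X)$. By the Tarski clauses built into the definition of valuation function, $f(\gcode{\varphi(\overline n, \overline X)}) = 1$ iff $f$ assigns $1$ to the code of $\psi(\overline n, \overline X, Z)$ under every substitution $Z \mapsto (W)_k$. Since $\psi$ is arithmetical and the first-order parts of $M$ and the ambient model coincide, each such subformula's valuation is forced to equal the actual truth value of $\psi(n, X, (W)_k)$. The hypothesis $\varphi(n, X)$ makes all of these $1$, so $f(\gcode{\varphi(\overline n, \overline X)}) = 1$ for every valuation function, giving $f_Y(n) \in C(X)$.

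For the reverse direction, I would argue by contraposition. Supposing $\varphi(n, X)$ fails, fix $Z_0$ with $\neg \psi(n, X, Z_0)$. In $\aca_0$, arithmetical comprehension produces a code $W$ for the countable Turing ideal $M = \{ A : A \Tleq X \oplus Z_0 \}$, which contains both $X$ and $Z_0$. I would then construct, again by arithmetical comprehension, a valuation function $f$ for $\varphi(\overline n, \overline X)$ on $M$: because $\psi$ is arithmetical, the correct truth values of $\psi(n, X, (W)_k)$ for $k \in \Nat$ form an arithmetical set in $W$, and assembling them via the Tarski clauses for the outer universal set quantifier yields a valuation assigning $0$ at $\gcode{\varphi(\overline n, \overline X)}$, since $Z_0$ appears as some $(W)_{k_0}$ and supplies the required counterexample. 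This witnesses $f_Y(n) \notin C(X)$.

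The main obstacle will be carefully handling valuation functions inside $\aca_0$, which (as emphasised in the paper) does not prove the existence of full satisfaction functions for countable coded $\omega$-models. The saving observation is that the argument only ever needs valuation functions for the single fixed $\Pi^1_1$ formula $\varphi(\overline n, \overline X)$ and its subformulas — all but the outermost of which are arithmetical — so the required valuations are themselves arithmetical in a code $W$ for the ideal, and thus definable in $\aca_0$. This is precisely why the paper frames definition \ref{dfn:x_computable_entailment_aca0} in terms of valuation functions rather than satisfaction functions, and it is what lets the internal analogue of lemma \ref{lem:ce_lower_bound} go through.
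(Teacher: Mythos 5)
Your proposal is correct and follows essentially the same route as the paper's proof: reduce to the normal form $\forall Z\,\psi$ with $\psi$ arithmetical, prove in $\aca_0$ the equivalence $\varphi(n,X) \leftrightarrow \gcode{\varphi(\overline n,\overline X)} \in C(X)$ using absoluteness of arithmetical formulas between the ambient model and countable coded $\omega$-models, with arithmetical comprehension supplying both the code for the ideal of sets recursive in $X \oplus Z_0$ and the needed valuation function, and then obtain $Y$ by recursive comprehension relative to $C(X)$. The only cosmetic difference is that the paper first applies the Kleene normal form theorem to get a $\Sigma^0_1$ matrix, whereas you work directly with the arithmetical matrix, which is harmless since absoluteness and the $\aca_0$-definable valuations are available for each fixed arithmetical formula.
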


\begin{proof}
    Let $\varphi(m, X)$ be as above.
    By the Kleene normal form theorem for $\Pi^1_1$ formulas, there is a
    $\Sigma^0_1$ formula $\sigma(m, f, X)$ with exactly the displayed free
    variables such that $\aca_0$ proves
    \begin{equation*}
        \forall{m}\forall{X}(
            \varphi(m, X) \leftrightarrow \forall{f}\sigma(m, f, X)
        ).
    \end{equation*}
    Given $m \in \Nat$ and $X \subseteq \Nat$, we reason in $\aca_0$ and show
    that
    \begin{equation*}
        \forall{f}\sigma(m, f, X)
        \leftrightarrow
        \gcode{\forall{f}\sigma(m, f, X)} \in C(X).
    \end{equation*}
    
    ($\Rightarrow$)
    Suppose $\gcode{\forall{f}\sigma(m, f, X)} \not\in C(X)$.
    Then there exists a code $W_1$ for a countable Turing ideal $M_1$ containing
    $X$, and a valuation function
    $g_1 : \mathrm{Sub}_{M_1}(\psi_1) \rightarrow \Set{0, 1}$
    (where $\psi_1 \equiv \forall{f}\sigma(m, f, X)$),
    such that $g_1(\psi_1) = 0$.
    By the definition of the valuation function, there exists $k \in \Nat$ which
    is the index of a function $f_1 = (W)_k$,
    such that $g_1(\gcode{\sigma(m, f_1, X)}) = 0$.
    $f_1$ exists by recursive comprehension in the parameter $W$, and since
    arithmetical formulas are absolute between the ambient model and any
    countable coded $\omega$-model, we have that $\neg\sigma(m, f_1, X)$, and so
    $\neg\forall{f}\sigma(m, f, X)$.
    
    ($\Leftarrow$)
    Suppose there exists $f_2$ such that $\neg\sigma(m, f_2, X)$.
    By arithmetical comprehension there exists a code $W_2$ for a countable
    Turing ideal $M_2$ such that $X, f_2 \in M_2$ (for example, just take the
    code for the ideal consisting of the sets recursive in $X \oplus f_2$).
    Arithmetical comprehension also proves the existence of a valuation function
    $g_2 : \mathrm{Sub}_{M_2}(\psi_2) \rightarrow \Set{0, 1}$, where $\psi_2
    \equiv \neg\sigma(m, f_2, X) \wedge \forall{f}\sigma(m, f, X)$.
    By absoluteness, $g_2(\gcode{\neg\sigma(m, f_2, X)}) = 1$,
    so $g_2(\gcode{\forall{f}\sigma(m, f, X)}) = 0$,
    and hence $\gcode{\forall{f}\sigma(m, f, X)} \not\in C(X)$.
    
    Given a set $X \subseteq \Nat$, assume that $C(X)$ exists. Let $Y$ be the
    set of all $m$ such that $\gcode{\forall{f}\sigma(m, f, X)} \in C(X)$.
    $Y$ is recursive in $C(X)$, and thus exists by recursive comprehension.
    So by the equivalence just proved,
    $\forall{m}(m \in Y \leftrightarrow \varphi(m, X))$.
\end{proof}

While this result demonstrates that the complexity of the computable entailment
relationship is in some sense accessible to the predicativist, if not the
finitistic reductionist, it is not clear what the philosophical moral should be.
A natural interpretation might be that a $\Pi^1_1$ complete entailment relation
such as computable entailment is much more uncomputable than the relation of
provability in classical logic, and that this degree of uncomputability can be
comprehended from within a predicativist formal theory. This conclusion goes
hand in hand with a more general view that the Turing degrees track a hierarchy
of relative difficulty of problem solving: problems with higher Turing degree
are harder to solve than those with lower Turing degree.

This is related to the naïve reading of the intuition discussed in
§\ref{ssec:complexity_difficulty} that ``being harder to prove'' means ``harder
to compute'', and is an unsatisfying interpretation for similar reasons. The
first concerns the use of the computable entailment relation. We are not seeking
a general method that for any $\varphi, \psi$ in the language of second order
arithmetic tells us whether or not $\varphi \models_c \psi$. Rather, given
specific statements of mathematical interest, we try to prove (or disprove) that
one computably entails the other. The proofs involved here are typical
mathematical proofs, carried out in the usual way, not infinitary inferences:
they quantify over Turing ideals, but they do not require that we are able, as
mathematicians, to solve the halting problem or determine membership in Kleene's
$\mathcal{O}$.
Secondly, it is unclear why---given that they are both uncomputable---that the
computable entailment relation is epistemically any more intractable than the
standard first order provability relation of classical logic. Absent the ability
to carry out supertasks, we cannot solve the halting problem, so even in
principle, determining for arbitrary $\varphi, \psi$ whether or not
$\rca_0 \vdash \varphi \rightarrow \psi$
seems as out of reach as determining whether
$\varphi \models_c \psi$.

Given this, a more plausible reconstruction of mathematical practice when we
prove computable entailments or failures of computable entailment is that we
work in a way that can be formalised in a standard deductive calculus, but that
in doing so we assume that quantifying over all Turing ideals (or equivalently,
over all countable Turing ideals) is well-defined. One way of ensuring this
well-definedness is to work in a background theory that proves that the
extension of the computable entailment relation exists.
In this context, knowing the precise Turing degree of the computable entailment
relation takes on greater significance, since it allows us to determine what
axioms are both necessary and sufficient to prove its well-definedness.

\begin{cor}
    \label{thm:equiv_piicao_compent}
    The following are equivalent over $\aca_0$.
    \begin{enumerate}
        \item $\Pi^1_1$ comprehension.
        \label{thm_equiv_piicao_compent_a}
        \item For every $X \subseteq \Nat$, the truth set $C(X)$
        of the $X$-computable entailment relation exists.
        \label{thm_equiv_piicao_compent_b}
    \end{enumerate}
\end{cor}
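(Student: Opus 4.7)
The plan is to establish both directions by appealing directly to results already in hand: Lemma \ref{lem:ce_complexity_aca0} for the reversal, and an inspection of Definition \ref{dfn:x_computable_entailment_aca0} for the forward direction.

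For (\ref{thm_equiv_piicao_compent_a}) $\Rightarrow$ (\ref{thm_equiv_piicao_compent_b}), I would unpack the defining clause of $C(X)$ from Definition \ref{dfn:x_computable_entailment_aca0} and check that it is $\Pi^1_1$ in the parameter $X$. The clause has the shape ``for every code $W$ for a countable Turing ideal containing $X$, and for every valuation function $f$ on $\mathrm{Sub}_M(\varphi)$, we have $f(\gcode{\varphi}) = 1$''. By Lemma \ref{lem:ti_arithmetical} the predicate ``$W$ codes a countable Turing ideal containing $X$'' is arithmetical in $W$ and $X$, and the definition of a valuation function for a single $\lt$-sentence is likewise arithmetical. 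So the whole matrix is arithmetical, the outer two quantifiers over $W$ and $f$ are universal set quantifiers, and the membership predicate $\gcode{\varphi} \in C(X)$ is $\Pi^1_1(X)$. Applying $\Pi^1_1$ comprehension with parameter $X$ then yields the existence of $C(X)$ as a set.

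For (\ref{thm_equiv_piicao_compent_b}) $\Rightarrow$ (\ref{thm_equiv_piicao_compent_a}), I would reduce an arbitrary instance of $\Pi^1_1$ comprehension to the case handled in Lemma \ref{lem:ce_complexity_aca0}. Given a $\Pi^1_1$ formula $\psi(m, Z_1, \ldots, Z_k)$, set $X = Z_1 \oplus \cdots \oplus Z_k$; then $\psi$ can be replaced by an equivalent $\Pi^1_1$ formula $\varphi(m, X)$ with a single set parameter, using the fact that each $Z_i$ is recursive in $X$ and that this recoding is provable already in $\rca_0$. By hypothesis the set $C(X)$ exists, so Lemma \ref{lem:ce_complexity_aca0} delivers $Y = \Set{ m | \varphi(m, X) }$, which is exactly the required instance of $\Pi^1_1$ comprehension.

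The main work, such as it is, has already been done in the preceding lemmas: the nontrivial proof-theoretic content sits inside Lemma \ref{lem:ce_complexity_aca0}, where one must show within $\aca_0$ that $\Pi^1_1$ truth is captured by membership in $C(X)$. The only thing I would still need to verify carefully in the corollary itself is that the definition of $C(X)$ really does come out as $\Pi^1_1$ with a set parameter in the base theory $\aca_0$ as formulated via valuation functions rather than full satisfaction functions; this is the minor technical point flagged in the footnote to Definition \ref{dfn:x_computable_entailment_aca0}, and it is what justifies the move from the external $\Pi^1_1$-completeness of Theorem \ref{thm:ce_complexity} to an internal proof in $\aca_0$.
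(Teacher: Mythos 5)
Your proposal is correct and takes essentially the same route as the paper: the forward direction by observing that Definition \ref{dfn:x_computable_entailment_aca0} gives a $\Pi^1_1(X)$ definition of $C(X)$ (arithmetical matrix via Lemma \ref{lem:ti_arithmetical}, universal set quantifiers over codes $W$ and valuation functions $f$) and applying $\Pi^1_1$ comprehension, and the reversal by coding the set parameters of an arbitrary $\Pi^1_1$ formula into a single parameter and invoking Lemma \ref{lem:ce_complexity_aca0}. The only cosmetic difference is that you join parameters with recursive join where the paper uses pairing, which is the same manoeuvre.
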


\begin{proof}
    The definition of $C(X)$ is $\Pi^1_1$, as can be seen from definition
    \ref{dfn:x_computable_entailment_aca0}, so $\Pi^1_1$ comprehension proves
    that for any $X \subseteq \Nat$, $C(X)$ exists.
    
    For the reversal we work in $\aca_0$.
    Let $\varphi(m)$ be a $\Pi^1_1$ formula.
    Using pairing, join all $n$ free set variables in $\varphi(m)$, to produce
    an equivalent formula $\varphi'(m, Y)$ with a single free set variable $Y$,
    such that $\aca_0$ proves
    \begin{equation*}
        \forall{X_1}\dotsc\forall{X_n}\forall{Y}\forall{m}(
            \varphi(m) \leftrightarrow \varphi'(m, Y)
        ).
    \end{equation*}
    Given $Y$, assume that $C(X)$ exists.
    Lemma \ref{lem:ce_complexity_aca0} applied to $\varphi'(m, Y)$, together
    with the above equivalence, implies the existence of a set $Z$ such that
    $\forall{m}(m \in Z \leftrightarrow \varphi(m))$.
    This proves $\Pi^1_1$ comprehension.
\end{proof}

$\pica_0$ is the strongest of the subsystems of second order arithmetic usually
studied in reverse mathematics. Computational reverse mathematics therefore
draws on resources which are unavailable in the four members of the Big Five
that are proof-theoretically weaker than $\pica_0$.
Moreover, since \ref{thm:equiv_piicao_compent} is provable within a predicative
system, the predicativist is in a position to calibrate the strength of the
commitment involved in accepting computable entailment. Doing so, she will see
that not only is it stronger than predicative systems like $\aca_0$, but also
predicatively reducible ones like $\atr_0$. So not only does the existence of
the truth set for the computable entailment relation exceed the strength of the
predicativist and the predicative reductionist's theoretical resources, but they
are in a position to see that it does. Since they reject impredicative
mathematics, and thus reject $\Pi^1_1$ comprehension, they must therefore reject
the equivalent statement that the truth set for computable entailment exists.

For foundational analysis to be a useful and worthwhile endeavour within the
philosophy of mathematics, the fruits of its analysis must be epistemically
available to disputants. Recall our example of Sarah the predicativist from
§\ref{sec:foundational_analysis}. Since she accepts $\aca_0$, she believes that
the equivalence between $\Pi^1_1$ comprehension and the statement ($\star$),
``Every countable abelian group can be expressed as a direct sum of a divisible
group and a reduced group'' is true, since it is provable in a system contained
in $\aca_0$ (namely $\rca_0$). How she responds to Rebecca's challenge that
Sarah's predicativism is misguided, since it does not allow her to prove the
ordinary mathematical theorem ($\star$), will depend on the details of her views
about the foundations of mathematics, but she cannot dismiss the equivalence as
question-begging. On the other hand, suppose Rebecca were instead to present
Sarah with the following argument: $\pica_0$ and ($\star$) are computably
equivalent, that is to say they are true in exactly the same Turing ideals.
Sarah should therefore accept $\pica_0$, since ($\star$) is an ordinary
mathematical theorem that any reasonable foundational system should prove. In
this case Sarah can resist the conclusion by refusing to accept the antecedent:
computable equivalence is not a well-defined notion, since it presupposes
theoretical resources which predicativism denies. Any argument presupposing that
computable equivalence is a well-defined notion therefore begs the question
against her position.

We argued in §\ref{sec:foundational_analysis} that philosophical arguments that
attempts to invoke reverse mathematical results in foundational analysis should,
if they are to have any force, appeal only to principles that targets of these
argument already accept. In other words, its presuppositions must not exceed
their theoretical commitments. But the argument above shows that the theoretical
commitments which accompany the use of computable entailment outstrip those
acceptable to partisans of most of the foundational programs associated with
subsystems of second order arithmetic. Computational reverse mathematics does
not allow one, in general, to persuasively demonstrate the mathematical limits
of these foundational programs to those who accept them.

\section*{Funding}

This work was supported by the Arts and Humanities Research Council [doctoral
studentship, 2011–14]; and the Leverhulme Trust [International Network grant
``Set Theoretic Pluralism''].

\section*{Acknowledgements}

This article is drawn from my doctoral research at the University of Bristol.
I am indebted to Carl Mummert for his proof of theorem \ref{thm:ce_complexity}
and for his patient explanations of the result and related matters.
I would also like to thank Kentaro Fujimoto, who helped me determine the correct
base theory for lemma \ref{lem:ce_complexity_aca0}, and François G. Dorais, for
explaining the subtleties of satisfaction relations in reverse mathematics.
I am grateful to the journal editor and two anonymous referees whose comments
helped me to substantially improve this paper.
Finally I would like to thank
Marianna Antonutti Marfori,
Walter Dean,
Leon Horsten,
Richard Kaye,
Øystein Linnebo,
Toby Meadows,
Richard Pettigrew,
Sam Sanders,
Sean Walsh,
and Philip Welch,
for fruitful discussions and their many helpful comments,
as well as the audiences of talks in Lund, Bristol, Munich, Birmingham,
and Vienna, where I presented earlier versions of this material.


\end{document}